\newcommand\supp{\hbox{supp}}
\newcommand\wt{\widetilde}
\newcommand\wh{\widehat}
\newcommand\tp{\top}
\newcommand\Z{\mathbb{Z}}
\newcommand\R{\mathbb{R}}
\newcommand\N{\mathbb{N}}
\newcommand\C{\mathbb{C}}
\newcommand\T{\mathbb{T}}
\newcommand\Sd{\mathcal{S}} 
\newcommand\Dm{\mathcal{D}} 
\newcommand\Id{\mathcal{I}} 
\newcommand\ga{\alpha}
\newcommand\gb{\beta}
\newcommand\gga{\gamma}
\newtheorem{theorem}{Theorem}
\newtheorem{corollary}[theorem]{Corollary}
\newtheorem{example}{Example}
\begin{document}
\title{Linear multiscale transforms based on even-reversible  subdivision operators }

\author[rvt0]{Nira Dyn}
\ead{niradyn@post.tau.ac.il}
\address[rvt0]{School of Mathematical Sciences, Tel-Aviv University, Israel}

\author[rvt2]{Xiaosheng Zhuang\corref{cor}\fnref{fn1}}
\ead{xzhuang7@cityu.edu.hk}
\address[rvt2]{Department of Mathematics, City University of Hong Kong, Tat Chee Avenue, Kowloon Tong, Hong Kong}
\cortext[cor]{Corresponding author.}


\fntext[fn1]{The research  of X. Z. and the work described in this paper was partially supported by a grant from the Research Grants Council of the Hong Kong Special Administrative Region, China (Project No.  CityU 11300717) and grants from City University of Hong Kong (Project No.: 7200462 and 7004445).}

\begin{abstract}
Multiscale transforms for real-valued data, based on interpolatory
subdivision operators have been studied in recent year.  They are easy to define, and can be  extended
to other types of data, for example to  manifold-valued data.
In this paper we define linear multiscale transforms, based on certain linear,
non-interpolatory  subdivision operators, termed ``even-reversible''.  For such operators, we prove, using Wiener's lemma, the existence of an inverse to the linear operator defined by the even part of the subdivision mask, and termed it ``even-inverse''. We show that the non-interpolatory subdivision operators,  with
spline  or with   pseudo-spline masks, are even-reversible,
and derive explicitly, for the quadratic and cubic spline subdivision operators, the symbols of the corresponding even-inverse operators.  We also analyze properties of the multiscale transforms based on even-reversible subdivision operators, in particular, their stability and the rate of decay of the details.
\end{abstract}

\begin{keyword}
even-reversible subdivision\sep pyramid data \sep multiscale transform\sep interpolatory subdivision\sep
spline subdivision operators \sep  primal and dual pseudo spline subdivision operators \sep  stability\sep Wiener's lemma\sep bi-infinite Toeplitz matrix

\MSC[2000]{42C40,  41A15, 94A12, 93C55, 93C95, 93C30}
\end{keyword}

\maketitle
\makeatletter \@addtoreset{equation}{section} \makeatother


\section{Introduction}
\label{sec:intro}

A multiscale transform  is an ubiquitous way for representing data
of the form
$\{(t_k, f_k), \ k\in\Z\}$, with $t_k=k h$ for some positive $h$,
and $f_k$ a real value associated with the point $t_k$. A general approach to
multiscale transforms was introduced in \cite{Harten}.
Such a linear representation can be obtained by discrete wavelet transforms
(see e.g. \cite{Daub:book}). Another common way for generating a multiscale transform
is based on  linear or non-linear interpolatory
subdivision schemes (see e.g. \cite{DonYu,Amat, DG2011}). For a detailed analysis of
several non-linear multiscale transforms see \cite{DynOswald} and
references therein.
The main applications of multiscale transforms are in data compression and
denoising (see e.g. \cite{DonYu,Amat}).

Multiscale transforms based on interpolatory subdivision operators are
simple and rather intuitive. The linear ones were extended to manifold-valued
data in \cite{GW2009,GW2012} and \cite{Rahman}.

In this work we extend the construction
of multiscale transforms from interpolatory subdivision operators to a wider
class of subdivision operators termed {\em even-reversible} operators,
and show that a large class of the subdivision operators studied in the literature are even-reversible.
We derive properties of the multiscale transforms based on even-reversible
subdivision operators, such as decay rates of the data generated by the
transforms, and the stability of the transforms.

\subsection{Multiscale transform: from interpolatory to even-reversible subdivision}
\label{sec:linear-intp}

First we present the multiscale transform based on an interpolatory
subdivision operator $\Sd$.

Let ${\bf f}$ denote a bi-infinite sequence with
elements  $\{ f_k\in\R,\ k\in\Z\}$. Since $\Sd$ is interpolatory,
$$(\Sd{\bf f})_{2k}=f_k,\quad k\in\Z, $$
and it is straightforward to
decimate data at level $j$, ${\bf f}^{j}$, to that at the
coarser level $j-1$,
by taking every
second element. Then the refinement of ${\bf f}^{j-1}$ by $\Sd$ is exact for
the even elements of ${\bf f}^j$.

Here is the pyramid of data generated by $j$ {\em decomposition} steps of
the multiscale transform based on $\Sd$.
\begin{equation}
\label{decomposition}
{\bf f}^{\ell-1}=\{f^{(\ell-1)}_k=f^{(\ell)}_{2k},\  k\in\Z\},\ \  {\bf d}^\ell
={\bf f}^\ell-\Sd{\bf f}^{\ell-1},\ \  \ell=j,j-1,\ldots,1.
\end{equation}
The elements of ${\bf d}^\ell$ are termed {\em details} at level $\ell$.
The data ${\bf f}^j$ can be obtained exactly from the data of the pyramid
by the {\em reconstruction} steps
\begin{equation}
\label{reconstruction}
{\bf f}^\ell=\Sd{\bf f}^{\ell-1}+{\bf d}^\ell,\quad  \ell=1,2,\ldots,j.
\end{equation}
Note that every second element of ${\bf d}^\ell$ vanishes, since $\Sd$ is
interpolatory. Thus this method yields  details  satisfying
\begin{equation}\label{eq:dwn2_detail}
{\bf d}^\ell_{2k} =0,\quad k\in\Z.
\end{equation}

Multiscale transforms based on  non-interpolatory subdivision operators were studied before,
with the decimation from level $j$ 
to level
$j-1$,  defined by ``reverse subdivision". In \cite{Samavati}, ${\bf f}^{j-1} = \Dm {\bf f}^j$,  where the decimation operator $\Dm$ is related to the subdivision operator $\Sd$ by a least squares fit of $\Sd{\bf f}^{j-1}$ to ${\bf f}^j$. This method is improved in \cite{Sadegh}, by minimizing a functional consisting of two terms;
one is $\| \Sd(\Dm{\bf f}^j)-{\bf f}^j\|_2$ and the other is a roughness measure
of $ {\bf f}^{j-1}=(\Dm{\bf f}^j) $.
It is clear that in this approach the details do not satisfy \eqref{eq:dwn2_detail}.  Specific ways to reverse Chaikin  scheme and  Catmull-Clark scheme are presented in \cite{Chaikin} and \cite{CC}, respectively.

Here we propose to reverse only the even elements of ${\bf f}^j$,
as in the case of interpolatory subdivision operators, namely for a given
subdivision operator $\Sd$ to use a decimation operator $\Dm$ such that
\begin{equation}
\label{evenreversible}
 (\Sd(\Dm{\bf f}^j))_{2k}=f^{(j)}_{2k},\ \  k\in\Z.
\end{equation}
This is achieved by using Wiener's lemma \cite{Greochenig2009}, which
guarantees the existence of such a decimation operator, under mild conditions
on the mask of $\Sd$.
We term such subdivision operators, for which $\Dm$ satisfying (\ref {evenreversible})
exists, even-reversible, and refer to $\Dm$ as the even-inverse of $\Sd$.

\subsection{Outline}
Section \ref{sec:pre} consists of mathematical tools and results used in the paper, such as Wiener's lemma. In Section~\ref{sec:even}, we first give a sufficient condition on the mask of a subdivision operator guaranteeing that the operator is even-reversible,  then show that pseudo-spline subdivision operators are even-reversible, and derive explicit expression of the symbols of the even-inverse operators corresponding to the  quadratic and cubic spline subdivision operators. In Section~\ref{sec:dec:stab}, we derive the decay rate of the details in the pyramid generated by the multiscale transform and analyze the stability of the transform.  Some final remarks are given in Section~\ref{sec:remarks} and some technical proofs are postponed to the Appendix.

\section{Preliminaries}
\label{sec:pre}

In this section, we introduce some necessary notation and known results related to subdivision operators and Wiener algebra, which are used in this paper.

We denote by $l(\Z)$ the space of real-valued sequences $\ga:\Z\rightarrow\R$ and by $l_0(\Z)\subset l(\Z)$ the space of sequences of finite support.
For  $p\in[1,\infty]$, $l_p(\Z)$  denotes the usual space of  $l_p$ sequences. That is,
\[
l_p(\Z) :=\left\{\ga\in l(\Z): \|\ga\|_p:=\left(\sum_{k\in\Z} |\ga_k|^p\right)^{1/p}<\infty\right\}, \quad 1\le p<\infty,
\]
and
\[
l_\infty(\Z) :=\{\ga\in l(\Z): \|\ga\|_\infty:=\sup_{k\in\Z} |\ga_k|<\infty\}.
\]
Note that the inclusion $
l_0(\Z)\subset l_1(\Z)\subset l_p(\Z)\subset l_q(\Z)\subset l_\infty(\Z)
$
holds for any $1< p< q< \infty$.

We say that $\ga\in l(\Z)$ is a \emph{mask} if $\ga\in l_1(\Z)$. A mask $\ga$ is of finite support if $\ga\in l_0(\Z)$, that is, $\ga_k = 0$ for all $|k|\ge N$ for some integer $N$.
Given a mask $\ga$, we can define the (dyadic) \emph{ upscaling rule} or \emph{subdivision operator} $\Sd_\ga: l_\infty(\Z) \rightarrow l_\infty(\Z)$ associated with $\ga$ by
\begin{equation}
\label{def:subd}
(\Sd_{\ga} c)_k :=\sum_{\ell\in\Z} \ga_{k-2\ell} c_\ell,\quad k\in\Z, \; c\in l_\infty(\Z).
\end{equation}
It is easily seen that  $\Sd_\ga$ is a bounded linear operators on $l_\infty(\Z)$ provided that $\ga\in l_1(\Z)$. The \emph{subdivision scheme} based on $\Sd_\ga$ is the repeated application of \eqref{def:subd}, generating a sequence  of sequences
$\Sd_\ga^j c$, $  j \in\N$,  for $c\in l_\infty(\Z)$. The subdivision scheme is said to be \emph{convergent} if the sequence of piecewise linear interpolants to the data $\{(k 2^{-j}, (\Sd_\ga^j c)_k), k\in\Z\}$, $j\in\N$, is uniformly convergent for any $c\in l_\infty(\Z)$. For more about subdivision schemes see e.g. \cite{Dyn}.

We next introduce convolution, downsampling, and upsampling operations. For two sequences $\ga\in l_1(\Z)$ and $c\in l_\infty(\Z)$, we defined the \emph{convolution} $\ga*c\in l_\infty(\Z)$ to be
\[
(\ga*c)_k:=\sum_{\ell\in\Z}\ga_\ell c_{k-\ell},\quad k\in\Z,
\]
and the \emph{downsampling operator}  $\downarrow2$ applied to a sequence $c$  to be
\[
(c\downarrow 2)_k:= c_{2k},\quad k\in\Z,
\]
as well as the \emph{upsampling operator} $\uparrow2$:
\[
(c\uparrow 2)_k :=
\begin{cases}
c_{k/2} & k \mbox{ even}\\
0 & \mbox{otherwise}
\end{cases},\quad \quad k\in\Z.
\]
The subdivision in \eqref{def:subd} can be restated as
$\Sd_\ga c = \ga * (c\uparrow 2)$.

For a sequence $c\in l(\Z)$, we define its \emph{symbol} to be
\[
c(z) := \sum_{k\in\Z} c_k z^k,\quad z\in\C.
\]
Its even part $c_{ev}$ to be $(c_{ev})_k = c_{2k} = (c\downarrow 2)_k$, $k\in\Z$, and  its odd part $c_{od}$ to be $(c_{od})_k=c_{2k+1}$, $k\in\Z$. The symbols of the even and odd parts   can be determined by
\[
c_{ev}(z^2) = \frac{c(z)+c(-z)}{2}\quad\mbox{and}\quad
c_{od}(z^2) = \frac{c(z)-c(-z)}{2z}.
\]
In terms of symbolic computation, it is easily shown that
\[
\begin{aligned}
\; &[c\downarrow 2](z)  =c_{ev}(z), & [c\uparrow2](z)&=c(z^2), \\
&c(z)  =c_{ev}(z^2)+z\, c_{od}(z^2),&  [\ga*c](z) &= \ga(z)c(z).
\end{aligned}
\] Moreover, we have
$[\Sd_\ga c](z) = \ga(z)c(z^2)$.

In this paper, we  investigate the following multiscale transform based on a subdivision operator $\Sd_\ga$:
\begin{equation}
\label{def:NewSubd:forward}
c^{(j-1)} = \Dm_\gga c^{(j)} :=\gga*(c^{(j)}\downarrow 2),\quad d^{(j)} := c^{(j)}-\Sd_\ga c^{(j-1)},
\end{equation}
where $\Dm_\gamma$ is a \emph{decimation operator} associated with a mask $\gga$ to be determined.
Iterating \eqref{def:NewSubd:forward}  yields a pyramid consisting of the data $\{c^{(0)}; d^{(1)},\ldots, d^{(j)}\}$, where $c^{(0)}$ is the coarse approximation coefficients and $d^{(\ell)}$ are the detail coefficients at level $\ell=1,\ldots, j$. The reconstruction (backward transform) from $\{c^{(0)}; d^{(1)},\ldots, d^{(j)}\}$ is straightforward:
\begin{equation}
\label{def:reconstruction}
c^{(\ell)} = \Sd_\ga c^{(\ell-1)}+ d^{(\ell)}, \quad \ell=1,\ldots, j,
\end{equation}
and has the perfect reconstruction property for any pair $(\gga,\ga)$ of masks.
However, the detail coefficients $d^{(\ell)}, \ell=1,\ldots, j$ do not necessarily satisfy \eqref{eq:dwn2_detail}.

Given a subdivision operator $\Sd_\ga$ associated with a finitely supported mask $\ga\in l_0(\Z)$, we  investigate the decimation  operator $\Dm_\gga$ associated with a mask $\gga$ so that $d^{(\ell)}$ satisfies $d^{(\ell)}_{2k} =0$, $k\in\Z$, for $\ell=1,\ldots, j$; as in the case when $\Sd_\ga$ is an interpolatory subdivision operator and $\Dm_\gga c = c\downarrow 2$. Thus, we are seeking a mask $\gga$ such that
\begin{equation}
\label{cond:dwn2}
[(\Id- \Sd_\ga\Dm_\gga) c]\downarrow 2 =0
\end{equation}
for any sequence $c\in l_\infty(\Z)$, where $\Id$ is the identity operator. In such a case, the detail coefficients can be downsampled by a factor of $2$ without loss of  information, which is a   desirable property in data compression.

To solve $\gga$ from \eqref{cond:dwn2}, we use  Wiener's lemma  \cite{Greochenig2009}.
 First, we introduce Wiener algebra.

Let $\T:=\{z\in\C: |z|=1\}$.
The \emph{Wiener algebra} $W(\T)$ consists of all complex-valued functions $f$ on $[-\pi,\pi]$ such that $f$ has  absolutely convergent Fourier series. That is,
\[
W(\T):=\{f\in C([-\pi,\pi]):
\|f\|:=\sum_{n\in\Z} |\wh f(n)|<\infty
\},
\]
where $\wh f(n):=\frac{1}{2\pi}\int_{-\pi}^\pi f(x)e^{-inx}dx$ is the $n$th Fourier coefficient of $f$. The Wiener algebra $W(\T)$ is closed under pointwise multiplication of functions. It is easy to show that
\[
\|fg\| \le \|f\| \cdot \|g\| \quad \forall f,g\in W(\T).
\] Thus the Wiener algebra is a commutative unitary Banach algebra. The Wiener algebra $W(\T)$ is isomorphic to the Banach algebra $l_1(\Z)$ with the isomorphism given by the Fourier transform: $ f \mapsto \{\wh f(n)\}_{n\in\Z}$.

\begin{theorem}[Wiener's Lemma] If $f\in W(\T)$ and $f(x)\neq 0$ for all $x\in[-\pi,\pi]$, then $1/f\in W(\T)$.
\end{theorem}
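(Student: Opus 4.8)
The plan is to exploit the fact, already emphasized in the excerpt, that $W(\T)$ is a commutative unital Banach algebra (isomorphic to $l_1(\Z)$ with the constant function $1$ as unit), and to deduce the invertibility of $f$ from the general Gelfand theory of such algebras. Recall that in a commutative unital Banach algebra an element is invertible if and only if it lies in no maximal ideal, equivalently if and only if $\phi(f)\neq 0$ for every \emph{character} (nonzero multiplicative linear functional) $\phi\colon W(\T)\to\C$. Thus the entire problem reduces to \emph{identifying all characters of} $W(\T)$ and checking that $f$ survives each of them.

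The core computation is to show that every character $\phi$ of $W(\T)$ is a point evaluation $\phi(g)=g(t)$ for some fixed $t\in[-\pi,\pi]$. First, any character of a unital Banach algebra is automatically bounded with $\|\phi\|=1$. Let $z\in W(\T)$ denote the generator $z(x)=e^{ix}$; it is a unit with $z^{-1}(x)=e^{-ix}$ and $\|z\|=\|z^{-1}\|=1$. Multiplicativity gives $\phi(z)\phi(z^{-1})=\phi(1)=1$, while boundedness gives $|\phi(z)|\le 1$ and $|\phi(z^{-1})|\le 1$; hence $|\phi(z)|=1$ and $\phi(z)=e^{it}$ for some $t\in[-\pi,\pi]$. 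By multiplicativity $\phi(z^n)=e^{int}=z^n(t)$ for all $n\in\Z$, so $\phi$ agrees with the evaluation $g\mapsto g(t)$ on every trigonometric polynomial. Since the partial sums of the Fourier series converge in the $W(\T)$-norm, such polynomials are dense, and both functionals are continuous; therefore $\phi(g)=g(t)$ for every $g\in W(\T)$.

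With the characters identified, the spectrum of $f$ is exactly its range: $\{\phi(f):\phi\text{ a character}\}=\{f(t):t\in[-\pi,\pi]\}=f([-\pi,\pi])$. The hypothesis $f(x)\neq 0$ for all $x$ says precisely that $0$ does not belong to this range, i.e. $f$ lies in no maximal ideal, so $f$ is invertible in $W(\T)$; its inverse is the desired $1/f\in W(\T)$.

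I expect the main obstacle to be the character-identification step, and behind it the general principle that invertibility in a commutative unital Banach algebra is detected by characters. The latter rests on the Gelfand--Mazur theorem (a complex Banach division algebra is $\C$), which ensures that quotients by maximal ideals are $\C$ and hence that maximal ideals correspond to characters. If one prefers a self-contained argument avoiding this machinery, the classical alternative is Wiener's original \emph{localization} proof: a Neumann series shows $1-u$ is invertible in $W(\T)$ whenever $\|u\|<1$; one then inverts $f$ locally near each point (after subtracting the value $f(t)$ and a small trigonometric-polynomial correction so that the remainder has small norm), and finally glues the local inverses using a finite partition of unity together with the compactness of $[-\pi,\pi]$. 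There the delicate point is arranging the local remainders to have $W(\T)$-norm strictly below $1$, which requires care since the $W(\T)$-norm controls more than the sup norm.
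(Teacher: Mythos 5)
Your proof is correct, but there is nothing in the paper to compare it against: the paper states Wiener's Lemma without proof, citing the survey of Gr\"ochenig \cite{Greochenig2009}, and then uses it as a black box to invert $\ga_{ev}$. What you have written is the standard Gelfand-theory proof of this classical result, and every step checks out: characters of a unital Banach algebra are automatically contractive with $\phi(1)=1$; since $e^{ix}$ and $e^{-ix}$ each have $W(\T)$-norm $1$, multiplicativity together with $\phi(z)\phi(z^{-1})=1$ forces $|\phi(z)|=1$, so $\phi(z)=e^{it}$; the partial sums of the Fourier series of any $g\in W(\T)$ converge to $g$ in the $W(\T)$-norm (the tail of an absolutely convergent series tends to $0$), so density of trigonometric polynomials plus continuity of both functionals identifies $\phi$ with evaluation at $t$; and the reduction of invertibility to nonvanishing of all characters is exactly the maximal-ideal argument resting on Gelfand--Mazur. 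Compared with simply importing the lemma as the paper does, your argument buys a short, conceptually transparent proof at the price of the Banach-algebra machinery; you also correctly identify the elementary alternative, Wiener's original localization proof, and correctly flag its genuinely delicate point, namely that the local remainders must be made small in the $W(\T)$-norm and not merely in the sup norm, since $\|g\|_\infty\le\|g\|$ but not conversely. One further point in favor of your route in this specific context: the Gelfand-theoretic proof adapts directly to the weighted and higher-dimensional versions of Wiener's lemma that the paper invokes (in the proof of Theorem~\ref{thm:pseudo-spline:inverse} for the dual pseudo-spline case, and in the final remarks), so your argument covers everything the paper actually needs from the citation.
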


Consequently, for $\ga\in l_0(\Z)$,
if $\ga(z)\neq 0$ for all $z\in\T$, then $\ga$ has an inverse $\ga^{-1}=:\gga\in l_1(\Z)$ determined by $\gga(z)=1/\ga(z)$, $z\in\T$.

\medskip

Any mask $\ga$ in the Wiener algebra defines a \emph{bi-infinite Toeplitz matrix}  of the form   $A_\ga = (\ga_{j-k})_{j,k\in\Z}$. Then, $A_\ga$ has an inverse if and only if $\ga$ has an inverse. In this case, we have that the inverse $(A_\ga)^{-1}$ of $A_\ga$ satisfies $(A_\ga)^{-1} = A_{\ga^{-1}}$.  A Toeplitz matrix $A_\ga$  is also a linear  operator on $l_2(\Z)$. One can show that the $l_2$-operator norm of $A_\ga$ is given by \cite[Section 2.2]{book1}
\begin{equation}
\label{eq:2-norm}
\|A_\ga\|_2:=\sup_{\|c\|_2=1} \|A_\ga c\|_2 = \sup_{z\in\T} |\ga(z)|.
\end{equation}

 We say that a finitely supported mask $\ga\in l_0(\Z)$ is \emph{$s$-banded} if $\ga_{k} = 0$ for all $|k|>s$. The following theorem gives  the decay of the inverse of a banded Hermitian Toeplitz matrix.

\begin{theorem}[Theorem 2.1 in \cite{Strohmer2002}]
\label{thm:A2}
Let $A$ be a bi-infinite Toeplitz matrix acting on $l_2(\Z)$ and assume $A$ to be Hermitian, positive definite, and s-banded (i.e. $A_{k,\ell} = 0$ if $|k-\ell|>s$). Set $\kappa =\|A\|_2\: \|A^{-1}\|_2$,
$\;q=(\sqrt{\kappa}-1)/(\sqrt{\kappa}+1)$, and $\lambda = q^{1/s}$. Then,
\[
(A^{-1})_{k,\ell} \le K\lambda^{|k-\ell|},\quad k, \ell\in\Z,
\]
where  $K=\|A^{-1}\|_2\max\left\{1,\frac{(1+\sqrt{\kappa})^2}{2\kappa}\right\}$.
 \end{theorem}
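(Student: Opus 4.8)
The plan is to follow the classical Demko--Moss--Smith strategy, which ties the off-diagonal decay of $A^{-1}$ to the rate at which $1/x$ can be approximated by polynomials on the spectrum of $A$. First I would record the spectral data. Since $A$ is Hermitian and positive definite, its spectrum $\sigma(A)$ is a subset of the real interval $[a,b]$ with $b=\|A\|_2=\max\sigma(A)$ and $a=1/\|A^{-1}\|_2=\min\sigma(A)$, so that $b/a=\|A\|_2\|A^{-1}\|_2=\kappa$. By the functional calculus for Hermitian operators, every real polynomial $p$ satisfies $\|p(A)\|_2=\max_{x\in\sigma(A)}|p(x)|\le\max_{x\in[a,b]}|p(x)|$; applying this to $p(x)-1/x$ gives the basic estimate $\|A^{-1}-p(A)\|_2\le\sup_{x\in[a,b]}|1/x-p(x)|$, and in particular $|(A^{-1}-p(A))_{k,\ell}|$ is bounded by the same quantity for every pair $(k,\ell)$.

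The second ingredient is the banded structure. Because $A$ is $s$-banded, the power $A^{j}$ is $js$-banded, so a polynomial $p_n$ of degree $n$ yields a matrix $p_n(A)$ that is $ns$-banded, i.e. $(p_n(A))_{k,\ell}=0$ whenever $|k-\ell|>ns$. Fix indices with $m:=|k-\ell|\ge1$ and let $n\ge0$ be the largest integer with $ns<m$; then $p_n(A)$ vanishes at the $(k,\ell)$ entry, whence
\[
(A^{-1})_{k,\ell}=(A^{-1}-p_n(A))_{k,\ell},\qquad
|(A^{-1})_{k,\ell}|\le\sup_{x\in[a,b]}\bigl|1/x-p_n(x)\bigr|=:E_n,
\]
where $p_n$ is now chosen to be the best uniform approximant to $1/x$ of degree $n$ on $[a,b]$.

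The crux, and the only genuinely technical step, is the sharp Chebyshev estimate for this best-approximation error, namely
\[
E_n\le\frac{(1+\sqrt{\kappa})^2}{2\kappa}\,\|A^{-1}\|_2\;q^{\,n+1},\qquad q=\frac{\sqrt{\kappa}-1}{\sqrt{\kappa}+1}.
\]
This is where the factor $(1+\sqrt{\kappa})^2/(2\kappa)$ in $K$ is produced; I would derive it by mapping $[a,b]$ conformally onto $[-1,1]$ and using the explicit minimax behaviour of Chebyshev polynomials, which supplies the geometric factor $q^{\,n+1}$. By the maximality of $n$ we have $(n+1)s\ge m$, hence $q^{\,n+1}\le q^{\,m/s}=\lambda^{m}$ with $\lambda=q^{1/s}$, and combining yields $|(A^{-1})_{k,\ell}|\le\frac{(1+\sqrt{\kappa})^2}{2\kappa}\|A^{-1}\|_2\,\lambda^{m}\le K\lambda^{m}$. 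Finally the diagonal band $m=0$, where no nontrivial $n$ is available, is covered by the trivial bound $|(A^{-1})_{k,\ell}|\le\|A^{-1}\|_2=\|A^{-1}\|_2\lambda^{0}$, which is $\le K\lambda^{0}$ precisely because of the $\max\{1,\cdot\}$ in the definition of $K$. The operator-theoretic and banded-matrix parts are routine; essentially all the work, and all the constants, live in the Chebyshev approximation bound.
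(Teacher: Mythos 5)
Your proposal is correct, and it is precisely the classical Demko--Moss--Smith argument (spectral calculus for the Hermitian operator, $ns$-bandedness of $p_n(A)$, and the Chebyshev best-approximation bound for $1/x$ on $[a,b]$, whose constant $\frac{(1+\sqrt{\kappa})^2}{2b}=\|A^{-1}\|_2\frac{(1+\sqrt{\kappa})^2}{2\kappa}$ together with the choice $(n+1)s\ge|k-\ell|$ and the $\max\{1,\cdot\}$ covering $|k-\ell|=0$ reproduces $K$ and $\lambda$ exactly) --- which is the proof behind Theorem 2.1 of \cite{Strohmer2002}. The paper itself imports this theorem from \cite{Strohmer2002} without proof, so your argument coincides with the proof in the cited source rather than with anything in the paper.
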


A direct consequence of the above results gives the decay property of the inverse $\gga$ of an invertible mask $\ga\in l_0(\Z)$, which shows that  the coefficients  of $\gga$ have exponential decay. For more general results on the decay property of the inverse of a mask, we refer to \cite{Greochenig2009}.
\begin{corollary}
\label{cor:1-norm-by-2-norm}
Let $\ga\in l_0(\Z)$ be  $s$-banded with $\ga(z)>0$ for all $z\in\T$.   Set $\kappa =\frac{\sup_{z\in\T}|\ga(z)|}{\inf_{z\in\T}|\ga(z)|}$, $q=(\sqrt{\kappa}-1)/(\sqrt{\kappa}+1)$, and $\lambda = q^{1/s}$. Then $\gga=\ga^{-1}$ exists and
\[
|\gga_\ell|\le K\lambda^{|\ell|},\quad  \ell\in\Z,
\]
where   $K=\frac{1}{\inf_{z\in\T}|\ga(z)|}\max\left\{1,\frac{(1+\sqrt{\kappa})^2}{2\kappa}\right\}$.
\end{corollary}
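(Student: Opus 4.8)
The plan is to derive the corollary from Theorem~\ref{thm:A2} applied to the bi-infinite Toeplitz matrix $A_\ga$. Since $\ga(z)>0$ for every $z\in\T$, in particular $\ga(z)\neq0$, Wiener's Lemma gives $\gga:=\ga^{-1}\in l_1(\Z)$ with $\gga(z)=1/\ga(z)$ on $\T$; equivalently $A_\ga$ is invertible with $(A_\ga)^{-1}=A_\gga$, as recalled above. Because $(A_\gga)_{k,\ell}=\gga_{k-\ell}$, it suffices to bound the entries of $(A_\ga)^{-1}$ and then specialize to $\ell=0$.

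First I would check that $A_\ga$ meets the three hypotheses of Theorem~\ref{thm:A2}. The band condition is immediate: $(A_\ga)_{k,\ell}=\ga_{k-\ell}$ vanishes whenever $|k-\ell|>s$, since $\ga$ is $s$-banded. For the Hermitian property, note that $\ga(z)>0$ forces $\ga(e^{ix})$ to be real, which together with $\ga$ being real-valued yields the symmetry $\ga_{-k}=\ga_k$, so $A_\ga$ is real symmetric. Positive definiteness follows from the identity $\ip{A_\ga c}{c}=\frac{1}{2\pi}\int_{-\pi}^{\pi}\ga(e^{ix})\,|c(e^{ix})|^2\,dx\ge\big(\inf_{z\in\T}\ga(z)\big)\|c\|_2^2$ for $c\in l_2(\Z)$, where the infimum is strictly positive because $\ga$ is continuous and strictly positive on the compact set $\T$.

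Next I would match the constants. By the norm formula~\eqref{eq:2-norm}, $\|A_\ga\|_2=\sup_{z\in\T}|\ga(z)|$, and the same formula applied to $A_\gga=(A_\ga)^{-1}$ gives $\|(A_\ga)^{-1}\|_2=\sup_{z\in\T}|\gga(z)|=1/\inf_{z\in\T}|\ga(z)|$. Hence the product $\|A_\ga\|_2\,\|(A_\ga)^{-1}\|_2$ occurring in Theorem~\ref{thm:A2} equals exactly the $\kappa$ of the statement, and consequently $q$, $\lambda=q^{1/s}$, and $K=\|(A_\ga)^{-1}\|_2\max\{1,(1+\sqrt\kappa)^2/(2\kappa)\}$ reduce to the quantities written in the corollary. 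Applying Theorem~\ref{thm:A2} and setting $\ell=0$ in the Toeplitz entries then delivers the claimed bound $|\gga_k|\le K\lambda^{|k|}$.

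The one point requiring care is the passage from the entrywise estimate in Theorem~\ref{thm:A2} to the \emph{absolute-value} bound of the corollary: since $A_\ga$ is positive definite, the off-diagonal entries of $(A_\ga)^{-1}$ need not be nonnegative, so I would invoke Theorem~\ref{thm:A2} in the form that bounds the modulus $|(A_\ga^{-1})_{k,\ell}|$. Granting this reading, the remainder is the bookkeeping above; the only genuinely substantive steps are the verification of positive definiteness and the identification of $\kappa$, both of which are short given the symbol calculus and the norm formula recalled in this section.
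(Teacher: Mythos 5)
Your proposal is correct and follows the same overall route as the paper: check that $A_\ga$ satisfies the three hypotheses of Theorem~\ref{thm:A2}, then identify the constants via the norm formula \eqref{eq:2-norm} and read off $|\gga_\ell|$ from the Toeplitz entries of $(A_\ga)^{-1}=A_{\ga^{-1}}$. The one step where you genuinely diverge is the verification of positive definiteness: the paper invokes the Fej\'er--Riesz lemma to factor $\ga(z)=\gb(z)\gb(1/z)$ with a real $\gb\in l_0(\Z)$, giving $A_\ga=A_\gb(A_\gb)^\tp$, whereas you bound the quadratic form directly by Parseval, $\ip{A_\ga c}{c}=\frac{1}{2\pi}\int_{-\pi}^{\pi}\ga(e^{ix})\,|c(e^{ix})|^2\,dx\ge\bigl(\inf_{z\in\T}\ga(z)\bigr)\|c\|_2^2$. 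Your argument is more elementary (no spectral factorization needed) and delivers strict definiteness with an explicit spectral lower bound in one stroke, while the factorization route gives only semidefiniteness from $A_\gb(A_\gb)^\tp$ and must reuse the positivity of $\ga$ to upgrade it; on the other hand, the paper's route sidesteps any discussion of why $\ga(z)>0$ forces $\ga$ to be symmetric, a point you correctly supply ($\ga$ real and real-valued on $\T$ implies $\ga(z)=\ga(1/z)$, hence $\ga_{-k}=\ga_k$). Two further details speak for your write-up: you record $\|A_\ga^{-1}\|_2=\sup_{z\in\T}|\gga(z)|=1/\inf_{z\in\T}|\ga(z)|$, whereas the paper's proof contains a typo asserting $\|A_{\ga^{-1}}\|_2=\inf_{z\in\T}|\ga(z)|$ (its final constant $K$ is nonetheless the correct one); and you rightly flag that Theorem~\ref{thm:A2} as printed omits the absolute value on $(A^{-1})_{k,\ell}$, which matters since off-diagonal entries of the inverse can be negative --- see \eqref{eq:cubic:inverse}, where the coefficients of $\gga$ alternate in sign --- and Strohmer's theorem does bound the modulus, exactly as the corollary requires.
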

\begin{proof}
Since $\ga(z)>0$ for all $z\in\T$, by Wiener's lemma, $\gga = \ga^{-1}$ exists. Now by Fej\'{e}r-Riesz Lemma \cite{Daub:book}, there exists a real-valued mask $\gb \in l_0(\Z)$ such that $\ga(z) = \gb(z)\gb(1/z)$ for all $z\in\T$. Then one can easily show that $A_\ga = A_{\gb}(A_{\gb})^\tp$. Hence, by  $\ga(z)>0$ for all $z\in\T$, we conclude that $A_\ga$ is symmetric positive definite. Now by Theorem~\ref{thm:A2} and the fact that $\|A_\ga\|_2=\sup_{z\in\T}|\ga(z)|$ and $\|A_{\ga}^{-1}\|_2=\|A_{\ga^{-1}}\|_2=\inf_{z\in\T}|\ga(z)|$, we conclude the result.
\end{proof}


\section{Even-reversible subdivision}
\label{sec:even}

In this section, we give a sufficient condition on the mask of a subdivision operator   for the existence of the  linear multiscale transform defined in \eqref{def:NewSubd:forward} that satisfies  property   \eqref{cond:dwn2}. We call such  a   subdivision operator \emph{even-reversible}.  We show that for a large class of even-reversible subdivision operators,  the elements of the mask of the  decimation operator in \eqref{cond:dwn2}  decay exponentially, and prove that most of the  subdivision operators  in the literature are in that class.
\subsection{Existence of the mutliscale transform}
\label{subsec:existence}

We first deduce a sufficient condition on the mask of a subdivision operator for the existence of linear multiscale transforms  satisfying \eqref{cond:dwn2}.
\begin{theorem}
\label{thm:dwn2} Let $\Sd_\ga$ be a subdivision operator  with a mask $\ga$ defined as in \eqref{def:subd}, and let $\Dm_\gga$ be a decimation operator  with a mask $\gga$ defined as in
\eqref{def:NewSubd:forward}.
Then
\eqref{cond:dwn2} holds, i.e., $d := (\Id-\Sd_\ga\Dm_\gga)c$ satisfies $(d\downarrow2)=0$ for any $c\in l_\infty(\Z)$,   if and only if $\gga$ is the inverse of $\ga_{ev}$.
\end{theorem}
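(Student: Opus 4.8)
The plan is to pass to symbols and reduce the operator identity \eqref{cond:dwn2} to a single scalar identity relating $\ga_{ev}$ and $\gga$. First I would compute the symbol of $d=(\Id-\Sd_\ga\Dm_\gga)c$ using the symbolic rules collected in Section~\ref{sec:pre}. Since $[\Dm_\gga c](z)=\gga(z)\,c_{ev}(z)$ and $[\Sd_\ga g](z)=\ga(z)\,g(z^2)$, applying $\Sd_\ga$ to $\Dm_\gga c$ gives
\[
[\Sd_\ga\Dm_\gga c](z)=\ga(z)\,\gga(z^2)\,c_{ev}(z^2),
\]
so that $d(z)=c(z)-\ga(z)\,\gga(z^2)\,c_{ev}(z^2)$.

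Next I would isolate the even part of $d$, because $(d\downarrow2)=0$ is equivalent to $d_{ev}=0$, i.e. to $d_{ev}(z^2)=\tfrac12\bigl(d(z)+d(-z)\bigr)=0$. Using $(-z)^2=z^2$ together with the symmetrization identities $c_{ev}(z^2)=\tfrac12(c(z)+c(-z))$ and $\ga_{ev}(z^2)=\tfrac12(\ga(z)+\ga(-z))$, the common factor $\gga(z^2)\,c_{ev}(z^2)$ comes out and I obtain
\[
d_{ev}(z^2)=c_{ev}(z^2)\bigl(1-\ga_{ev}(z^2)\,\gga(z^2)\bigr).
\]
This factorization is the heart of the argument: the odd part $z\,\ga_{od}(z^2)$ of $\ga(z)$ cancels under symmetrization, so only $\ga_{ev}$ survives, and the dependence of $d_{ev}$ on $c$ is captured entirely by the single factor $c_{ev}(z^2)$.

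From this I would argue both directions. If $\gga$ is the inverse of $\ga_{ev}$, then $\ga_{ev}(w)\gga(w)=1$ for all $w\in\T$, the bracket vanishes identically, and hence $d_{ev}\equiv0$ for every $c$. Conversely, if $(d\downarrow2)=0$ for every $c\in l_\infty(\Z)$, then the bracketed factor must vanish; here one must be careful to convert ``for all $c$'' into a genuinely pointwise statement rather than only $c_{ev}\bigl(1-\ga_{ev}\gga\bigr)=0$ for the particular $c$ at hand. The clean way, which also sidesteps convergence issues of symbols for $l_\infty$ data, is to redo the last step at the sequence level: writing $u=c\downarrow2$, a direct computation (using $(\Sd_\ga e)_{2k}=(\ga_{ev}*e)_k$) gives
\[
(d\downarrow2)=(\delta-\ga_{ev}*\gga)*u,
\]
where $\delta$ denotes the unit sequence for convolution. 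Since $u$ ranges over all of $l_\infty(\Z)$ as $c$ does, choosing $u=\delta$ (a single impulse) forces $\ga_{ev}*\gga=\delta$, i.e. $\gga=(\ga_{ev})^{-1}$.

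The only real obstacle is precisely this last quantifier step, namely passing from the identity holding for all inputs to the pointwise identity $\ga_{ev}\gga=1$; this is why I would present the converse through convolution against an impulse rather than by dividing symbols, where one would otherwise have to argue that $c_{ev}(z^2)$ can be made nonvanishing at each $z$. Everything else reduces to bookkeeping with the even/odd splitting already recorded in the preliminaries.
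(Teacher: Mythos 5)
Your proof is correct and takes essentially the same route as the paper: both reduce \eqref{cond:dwn2} by symbol calculus to the identity $c_{ev}(z^2)\bigl(1-\ga_{ev}(z^2)\gga(z^2)\bigr)=0$ for all inputs $c$, and read off that this is equivalent to $\ga_{ev}(z)\gga(z)=1$ on $\T$. Your explicit treatment of the converse quantifier step --- rewriting $(d\downarrow 2)=(\delta-\ga_{ev}*\gga)*u$ with $u=c\downarrow 2$ and testing against the impulse $u=\delta$ to force $\ga_{ev}*\gga=\delta$ --- is a step the paper's proof leaves implicit, and it is a sound addition that also avoids any convergence concerns about symbols of $l_\infty$ sequences.
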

\begin{proof} Note that \eqref{cond:dwn2} is equivalent to
\[
\frac{c(z)+c(-z)}{2} - \frac{[\Sd_\ga\Dm_\gga c](z)+[\Sd_\ga\Dm_\gga c](-z)}{2} = 0;
\]
that is,
\[
c_{ev}(z^2) = [\Sd_\ga \Dm_\gga c]_{ev}(z^2)
=\ga_{ev}(z^2)[\Dm_\gga c](z^2)
 = \ga_{ev}(z^2)\gga(z^2) c_{ev}(z^2), \quad z\in\T.
\]
Consequently, \eqref{cond:dwn2} is equivalent to $\ga_{ev}(z)\gga(z)=1$ for all $z\in\T$, i.e., $\gga$ is the inverse of the even part $\ga_{ev}$ of $\ga$.
\end{proof}

Now using  Wiener's lemma, we get
\begin{corollary}
\label{cor:existence}
For any mask $\alpha\in l_0(Z)$ satisfying $\alpha_{ev}(z)\neq 0$ for all $z\in\T$, \eqref{cond:dwn2} holds with $\gga\in l_1(Z)$  determined  by $\gga(z)=\frac{1}{\alpha_{ev}(z)}$, $z\in\T$.
\end{corollary}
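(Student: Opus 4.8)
The plan is to obtain this as an immediate consequence of Theorem~\ref{thm:dwn2} together with Wiener's lemma, so that the whole task reduces to exhibiting a sequence $\gga\in l_1(\Z)$ that inverts $\ga_{ev}$ at the level of symbols. First I would observe that since $\ga\in l_0(\Z)$, its even part $\ga_{ev}$, given by $(\ga_{ev})_k=\ga_{2k}$, is again finitely supported, hence $\ga_{ev}\in l_0(\Z)\subset l_1(\Z)$. Under the isomorphism between $l_1(\Z)$ and the Wiener algebra $W(\T)$ furnished by the Fourier transform, the sequence $\ga_{ev}$ corresponds to the function $x\mapsto \ga_{ev}(e^{ix})$, which lies in $W(\T)$. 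The standing hypothesis $\ga_{ev}(z)\neq 0$ for all $z\in\T$ is then exactly the nonvanishing condition demanded by Wiener's lemma.

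Next I would apply Wiener's lemma to conclude $1/\ga_{ev}\in W(\T)$. Transporting back through the isomorphism yields a sequence $\gga\in l_1(\Z)$ whose symbol satisfies $\gga(z)=1/\ga_{ev}(z)$ for all $z\in\T$; equivalently $\ga_{ev}(z)\gga(z)=1$ on $\T$, so that $\gga$ is precisely the inverse of $\ga_{ev}$ in the sense required by Theorem~\ref{thm:dwn2}. Invoking that theorem with this choice of $\gga$ then gives \eqref{cond:dwn2} for every $c\in l_\infty(\Z)$, which is the assertion of the corollary.

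There is no substantial obstacle here, since the corollary is by design only a packaging of the two preceding results. The one point that warrants care is the bookkeeping of the symbol/Fourier correspondence, and in particular verifying that the algebraic notion of \emph{inverse} used in Theorem~\ref{thm:dwn2} (namely the pointwise identity $\ga_{ev}(z)\gga(z)\equiv 1$ on $\T$) coincides with the Wiener-algebra inverse produced by the lemma. Because both reduce to the single relation $\gga(z)=1/\ga_{ev}(z)$ on $\T$, the two notions agree verbatim, and the conclusion follows directly.
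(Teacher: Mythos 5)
Your proposal is correct and follows exactly the route the paper intends: the corollary is stated immediately after the remark that Wiener's lemma applied to $\ga_{ev}\in l_0(\Z)\subset l_1(\Z)$ (nonvanishing on $\T$) produces $\gga\in l_1(\Z)$ with $\gga(z)=1/\ga_{ev}(z)$, which combined with Theorem~\ref{thm:dwn2} gives \eqref{cond:dwn2}. Your extra care about matching the Wiener-algebra inverse with the pointwise symbol identity is a fine touch but, as you note, the two notions coincide trivially here.
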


We call a subdivision operator $\Sd_\alpha$  with  a mask $\alpha$ such that $\alpha_{ev}(z)\neq0$ for all $z\in\T$   \emph{even-reversible}, and we call the decimation operator $\Dm_\gamma$ satisfying \eqref{cond:dwn2}  the \emph{even-inverse} of $\Sd_\ga$.
It is shown in the next subsection that for a large class of finitely supported masks, the condition $\alpha_{ev}(z)\neq0$ for all $z\in\T$ does hold.

In view of Theorem~\ref{thm:dwn2},  the multiscale transform \eqref{def:NewSubd:forward} becomes
\begin{equation}
\label{def:NewSubd:forward2}
\begin{cases}
c^{(\ell-1)}(z)& ={\ga_{ev}^{-1}(z)}{c_{ev}^{(\ell)}(z)}
\vspace{0.05in}
\\
 d_{od}^{(\ell)}(z) &=c_{od}^{(\ell)}(z)-\ga_{od}(z){\ga_{ev}^{-1}(z)}{c_{ev}^{(\ell)}(z)}
  \vspace{0.05in}
 \\
 d_{ev}^{\ell}(z)&\equiv0
 \end{cases},\quad \ell = j,\ldots,1.
\end{equation}
We term \eqref{def:NewSubd:forward2}  \emph{multiscale transform based on an even-reversible subdivision} (\emph{MTER}).

In terms of matrix computation, the MTER  can be written as
\[
\begin{cases}
c^{(\ell-1)} &=  A_{\ga_{ev}^{-1}}c_{ev}^{(\ell)}
\vspace{0.05in} \\
d_{od}^{(\ell)} &= c_{od}^{(\ell)} - A_{\ga_{od}} A_{\ga_{ev}^{-1}} c_{ev}^{(\ell)}
\vspace{0.05in}
\\
d_{ev}^{(\ell)} &\equiv0
\end{cases},\quad \ell = j,\ldots, 1.
\]
In particular, when $\ga$ is \emph{interpolatory},
\begin{equation}
\label{eq:intpSymb}
\ga_{ev}(z)\equiv1\mbox{~and~}\gga(z) = \ga_{ev}^{-1}(z)\equiv1,
\end{equation}
and the MTER is reduced to an interpolatory pyramid:
\[
\begin{cases}
c^{(\ell-1)} &= (c^{(\ell)}\downarrow 2)
\vspace{0.05in}\\
d_{od}^{(\ell)} &= c_{od}^{(\ell)} - \ga_{od} * ( c^{(\ell)}\downarrow 2)
\vspace{0.05in}\\
d_{ev}^{(\ell)} &\equiv0
\end{cases},\quad \ell = j,\ldots, 1.
\]

\subsection{The even-inverse of the subdivision operator}
In case $\alpha_{ev}>0$ for all $z\in\T$ and $\alpha_{ev}$ is $s$-banded, then by Corollary~\ref{cor:1-norm-by-2-norm},  $\gamma=\alpha_{ev}^{-1}$ exists and the elements of the mask $\gamma$ decay exponentially, which is an important feature for the computation of the decimation operation  $\Dm_\gga c$. More precisely
\[
|\gga_\ell|\le K\lambda^{|\ell|}\qquad  \forall\ell\in\Z,
\]
where $K$ and $\lambda$ are defined as in Corollary~\ref{cor:1-norm-by-2-norm} with $\alpha$ there replaced by $\alpha_{ev}$.

\subsection{Examples}
\label{subsec:examples}
In this subsection, we provide some examples of commonly used subdivision operators with finitely supported masks. We  show that for a large class of masks, the corresponding subdivision operators are even-reversible.

First, we give two examples of spline subdivision operators of low order for which we can compute explicitly the even-inverse. We omit the case of the linear spline subdivision operator because it is interpolatory.


\begin{example}[Quadratic spline]
\label{ex:spline:order3}
{\rm Consider the mask $\ga$ for the (centered) B-spline of order 3:
\[
\ga(z) = \frac{z^{-1}(1+z)^3}{2^2}.
\]
That is $\ga=\{\ga_{-1},\ga_{0},\ga_1,\ga_2\} = \frac14\{1,3,3,1\}_{[-1,2]}$ (i.e., $\supp (\ga) =[-1,2]\cap\Z$ with  $\ga_{-1}=1/4$, $\ga_{0}=\ga_1=3/4$, and $\ga_2=1/4$). Then,
\[
\ga_{ev}(z^2) = \frac12(\ga(z)+\ga(-z)) = \frac14(3+z^2),
\]
or in terms of sequence
$\ga_{ev} = \frac14\{3,1\}_{[0,1]}$. We have $|\ga_{ev}(z)|=|\frac14(3+z)|\ge\frac12$ for all $z\in\T$.
Consequently, the inverse  $\gga$ of $\ga_{ev}$ exists and is given by
\[
\gga(z) = \frac{1}{\ga_{ev}(z)} = \frac{4}{3+z}=\frac43\left(1-\frac13z+\frac19z^2+\cdots\right)=\frac43\sum_{k=0}^\infty\left(-\frac13\right)^k z^k,
\]
which is the symbol of the even-inverse operator $\Dm_\gamma$.
Thus, the quadratic spline subdivision operator is even-reversible.  Direct computations show that $\|\gga\|_1 =2$,  $\|A_\gga\|_2=2$, and $\|\gga\|_\infty=4/3$.
}
\end{example}

\begin{example}[Cubic spline]
\label{ex:spline:order4}
{\rm
Consider the mask $\ga$ for the (centered) B-spline of order 4:
\[
\ga(z) = \frac{z^{-2}(1+z)^4}{2^3}.
\]
That is $\ga=\frac18\{1,4,6,4,1\}_{[-2,2]}$. Then,
\[
\ga_{ev}(z^2) = \frac12(\ga(z)+\ga(-z)) = \frac18(z^{-2}+6+z^2),
\] or in terms of sequence
$\ga_{ev} = \frac18\{1,6,1\}_{[-1,1]}$. Note that $\ga_{ev}(z)=\frac18(z^{-1}+6+z)\ge\frac12$ for all $z\in\T$.
Consequently, the inverse  $\gga$ of $\ga_{ev}$ exists.  Hence the cubic spline subdivision operator is even-reversible, and
\[
\begin{aligned}
\gga(z)
&= \frac{1}{\ga_{ev}(z)} = \frac{8}{z^{-1}+6+z}
\\&= \frac43\times\frac{1}{1+\frac{z^{-1}+z}{6}}=\frac43\sum_{n=0}^\infty\left(\frac{-1}{6}\right)^n(z^{-1}+z)^n\\
& = \frac43\left(\sum_{n=0}^\infty6^{-2n}(z^{-1}+z)^{2n}-\sum_{n=0}^\infty 6^{-2n-1}(z^{-1}+z)^{2n+1}\right).\\
\end{aligned}
\]
By \eqref{eq:2-norm}, we have $\|A_\gga\|_2=2$.
We succeeded to obtain explicit expression of $\gga(z)$ as
\begin{equation}\label{eq:cubic:inverse}
\begin{aligned}
\gga(z) & =\sqrt{2}+\sqrt{2}\sum_{k=1}^\infty \left(\frac{-1
}{3+2\sqrt{2}}\right)^k\times\big(z^{k}+z^{-k}\big).
\end{aligned}
\end{equation}
With this explicit form of $\gga(z)$, the decimation operation in the MTER can be implemented. Moreover, from \eqref{eq:cubic:inverse}, we get   $\|\gga\|_1 = 2$,  $\|\gga\|_\infty=\sqrt{2}$,  and the exponential decay of the elements of $\gga$. The proof of \eqref{eq:cubic:inverse} is not straightforward and is given in the Appendix.
}
\end{example}

\medskip

From the above examples, one can expect that spline subdivision operators are even-reversible. In fact, it holds more generally. The class of spline subdivision operators can be regarded as a special subclass of a larger class called \emph{pseudo-spline} subdivision operators, which we introduce next.

Let $n,\nu\in\N\cup\{0\}$ satisfying $0\le \nu\le \lfloor n/2\rfloor-1$. Define
\begin{equation}
\label{def:pseudo-spline}
\begin{aligned}
\ga^{n,\nu}(z) &= \frac{z^{-\lfloor n/2\rfloor}(1+z)^{n}}{2^{n-1}}\sum_{j=0}^\nu{n/2+j-1\choose j} \left(\frac12-\frac{z+z^{-1}}{4}\right)^j.
\end{aligned}
\end{equation}
When $\nu=0$, the mask $\ga^{n,0}=\ga^n$ belongs to the family of masks of spline subdivision operators. When $n=2k$ and $\nu=k-1$, the mask $\ga^{2k,k-1}$ is the  Deslauries-Dubuc's interpolatory mask \cite{DD}.   For  $n=2k$  and $0\le \nu\le k-1$, the masks $\ga^{2k,\nu}$ are the masks of the  primal pseudo-splines (pseudo-splines of type II);
 while for $n=2k+1$, the masks $\ga^{2k+1,\nu}$, $0\le\nu\le k-1$ are  the masks of the dual pseudo-splines. For more about pseudo-splines, see \cite{DauHanRonShen, DongShen, DongDynHormann,DHSS}  and references therein.

The next theorem states that pseudo-spline subdivision operators are even-reversible, which implies  that all spline subdivision operators are even-reversible.

\begin{theorem}
\label{thm:pseudo-spline:inverse}
Let $n,\nu\in\N\cup\{0\}$ satisfying $0\le \nu\le \lfloor n/2\rfloor-1$, and  $\ga^{n,\nu}$  be the pseudo-spline mask defined as in \eqref{def:pseudo-spline}.
Then the following holds:
\begin{itemize}

\item[\rm{(1)}] $\ga^{n,\nu}_{ev}(1)=1$ and $\|A_{\ga^{n,\nu}_{ev}}\|_2 =\max_{z\in\T}|\ga_{ev}^{n,\nu}(z)| = 1$.

\item[\rm{(2)}]  $\gga=(\ga_{ev}^{n,\nu})^{-1}$ exists with $\gga(1) = 1$.

\item[\rm{(3)}]  The $l_2$-norm of $A_\gga$ is given by

\[
\|A_\gga\|_2  = \frac{2^{\lfloor \frac{n-1}{2}\rfloor+\nu}}{\sum_{j=0}^\nu{n/2+\nu\choose j}}.
\]

\item[\rm{(4)}] The elements of $\gga$ decay exponentially.
\end{itemize}
\end{theorem}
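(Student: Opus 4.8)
The plan is to reduce all four assertions to the behaviour of the Laurent polynomial $\ga^{n,\nu}_{ev}$ on the circle $\T$. Writing $z=e^{i\theta}$ and using $\frac{(1+z)^{n}}{2^{n-1}}=2\cos^{n}(\theta/2)e^{in\theta/2}$ together with $\tfrac12-\tfrac{z+z^{-1}}{4}=\sin^{2}(\theta/2)$, I would compute $\ga^{n,\nu}_{ev}(z^{2})=\tfrac12(\ga^{n,\nu}(z)+\ga^{n,\nu}(-z))$ directly. Setting $x=\sin^{2}(\theta/2)\in[0,1]$ and $P_\nu(t):=\sum_{j=0}^{\nu}\binom{n/2+j-1}{j}t^{j}$ (the degree-$\nu$ truncation of $(1-t)^{-n/2}$, so $P_\nu>0$ on $[0,1]$), the phase $z^{-\lfloor n/2\rfloor}e^{in\theta/2}$ collapses: for $n=2m$ even one gets the real expression $\ga^{n,\nu}_{ev}(z^{2})=g(x):=(1-x)^{m}P_\nu(x)+x^{m}P_\nu(1-x)$, while for $n=2m+1$ odd a residual factor $e^{i\theta/2}$ survives and $|\ga^{n,\nu}_{ev}(z^{2})|^{2}=G(x):=(1-x)^{n}P_\nu(x)^{2}+x^{n}P_\nu(1-x)^{2}$. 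Both $g$ and $G$ are symmetric about $x=\tfrac12$ and take value $1$ at $x=0$; since $w=z^{2}$ sweeps $\T$ as $x$ ranges over $[0,1]$, locating $\max_\T|\ga^{n,\nu}_{ev}|$ and $\inf_\T|\ga^{n,\nu}_{ev}|$ amounts to finding the extrema of $g$ (resp. $G$) on $[0,1]$.

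The heart of the proof is a single differentiation identity for the truncated series: using $j\binom{n/2+j-1}{j}=\tfrac n2\binom{n/2+j-1}{j-1}$ and Pascal's rule (valid for generalized binomial coefficients), I would establish
\[
(1-x)P_\nu'(x)-\tfrac n2\,P_\nu(x)=-\tfrac n2\binom{n/2+\nu}{\nu}x^{\nu}.
\]
Feeding this identity and its reflection $x\mapsto 1-x$ into $g'$ and $G'$ yields the factored forms $g'(x)=\tfrac n2\binom{n/2+\nu}{\nu}x^{\nu}(1-x)^{\nu}(x^{m-1-\nu}-(1-x)^{m-1-\nu})$ and $G'(x)=n\binom{n/2+\nu}{\nu}x^{\nu}(1-x)^{\nu}(x^{n-1-\nu}P_\nu(1-x)-(1-x)^{n-1-\nu}P_\nu(x))$. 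In the even case the bracket is manifestly $\le 0$ on $(0,\tfrac12)$, vanishing throughout only when $\nu=m-1$ (the interpolatory Deslauriers--Dubuc case, where $\ga^{n,\nu}_{ev}\equiv1$ and all four claims are immediate). I expect the \emph{main obstacle} to be the sign of the odd-case bracket, where $x^{n-1-\nu}$ is smaller than $(1-x)^{n-1-\nu}$ but $P_\nu(1-x)$ is larger than $P_\nu(x)$, so the competing effects must be weighed. I would resolve this with the elementary bound $P_\nu(1-x)/P_\nu(x)\le((1-x)/x)^{\nu}$, valid because the left side is a weighted average of the ratios $((1-x)/x)^{j}\le((1-x)/x)^{\nu}$, $0\le j\le\nu$, with the positive weights $\binom{n/2+j-1}{j}x^{j}$; this reduces the bracket to $P_\nu(x)(1-x)^{\nu}(x^{\,n-1-2\nu}-(1-x)^{\,n-1-2\nu})<0$ on $(0,\tfrac12)$, since $n-1-2\nu\ge1$. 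Hence $g$ and $G$ are non-increasing on $[0,\tfrac12]$ (strictly, away from the interpolatory case), so by symmetry the maximum $1$ is attained at $x=0$ (i.e. $w=1$) and the minimum at $x=\tfrac12$ (i.e. $w=-1$). This gives $\ga^{n,\nu}_{ev}(1)=1$ and, via \eqref{eq:2-norm}, $\|A_{\ga^{n,\nu}_{ev}}\|_{2}=\max_\T|\ga^{n,\nu}_{ev}|=1$, proving (1), together with the strict positivity $\ga^{n,\nu}_{ev}\neq0$ on $\T$.

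For (2), strict positivity and Wiener's Lemma yield the existence of $\gga=(\ga^{n,\nu}_{ev})^{-1}$, and $\gga(1)=1/\ga^{n,\nu}_{ev}(1)=1$. For (3) I would use $\|A_\gga\|_{2}=\sup_\T|\gga|=1/\inf_\T|\ga^{n,\nu}_{ev}|$, evaluating the infimum at $w=-1$: in both parities it equals $2^{-\lfloor(n-1)/2\rfloor}P_\nu(\tfrac12)$ (namely $g(\tfrac12)=2^{1-m}P_\nu(\tfrac12)$ for $n=2m$, and $\sqrt{G(\tfrac12)}=2^{-m}P_\nu(\tfrac12)$ for $n=2m+1$). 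It then remains to prove the combinatorial identity
\[
2^{\nu}P_\nu(\tfrac12)=\sum_{j=0}^{\nu}\binom{n/2+j-1}{j}2^{\nu-j}=\sum_{j=0}^{\nu}\binom{n/2+\nu}{j},
\]
which I would settle by induction on $\nu$, the inductive step being a direct application of Pascal's rule. Substituting gives exactly $\|A_\gga\|_{2}=2^{\lfloor(n-1)/2\rfloor+\nu}/\sum_{j=0}^{\nu}\binom{n/2+\nu}{j}$, as claimed.

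Finally, for (4) the even case has $\ga^{n,\nu}_{ev}>0$ on $\T$, so Corollary~\ref{cor:1-norm-by-2-norm} applies verbatim and delivers exponential decay of $\gga$. In the odd case $\ga^{n,\nu}_{ev}$ is complex-valued, so I would instead write $\gga=\overline{\ga^{n,\nu}_{ev}}/|\ga^{n,\nu}_{ev}|^{2}$ on $\T$, observe that $|\ga^{n,\nu}_{ev}|^{2}$ is a finitely supported, Hermitian, strictly positive symbol whose inverse mask decays exponentially by Corollary~\ref{cor:1-norm-by-2-norm}, and note that convolving it with the finite mask $\overline{\ga^{n,\nu}_{ev}}$ preserves exponential decay; alternatively one may invoke the general Wiener-type decay result of \cite{Greochenig2009}. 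This completes all four items.
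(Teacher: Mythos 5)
Your proposal is correct, and its skeleton coincides with the paper's proof: substitute $x=\sin^{2}(\theta/2)$, reduce the extrema of $|\ga_{ev}^{n,\nu}|$ on $\T$ to those of $g(x)$ (even $n$) resp.\ $G(x)=R(x)^{2}+R(1-x)^{2}$ (odd $n$) on $[0,1]$, differentiate via a binomial identity, obtain monotone decrease on $[0,1/2]$ with minimum at $x=1/2$ and maximum $1$ at the endpoints, and conclude through \eqref{eq:2-norm} and Wiener's lemma. You deviate in three sub-steps, all sound. (i) Your single identity $(1-x)P_\nu'(x)-\tfrac{n}{2}P_\nu(x)=-\tfrac{n}{2}\binom{n/2+\nu}{\nu}x^{\nu}$ handles both parities at once, where the paper computes $R'$ twice with parity-specific binomial identities; your constant $\tfrac{n}{2}\binom{n/2+\nu}{\nu}$ equals the paper's $(k+\nu)\binom{k-1+\nu}{\nu}$. (ii) For the sign of $G'$ in the odd case --- the step you flag as the main obstacle --- the paper sidesteps any comparison of $P_\nu(x)$ with $P_\nu(1-x)$ by expanding the bracket termwise as $\sum_{j=0}^{\nu}\binom{k-1/2+j}{j}x^{j}(1-x)^{j}\bigl[(1-x)^{2k-\nu-j}-x^{2k-\nu-j}\bigr]$, each summand being sign-definite on each half-interval; your weighted-average bound $P_\nu(1-x)/P_\nu(x)\le((1-x)/x)^{\nu}$ is correct (and uses exactly $n-1-2\nu\ge 1$, guaranteed by $\nu\le\lfloor n/2\rfloor-1$), but it is heavier machinery than the termwise expansion requires. (iii) Most substantively, for item (4) with $n$ odd the paper appeals to the weighted Wiener's lemma of \cite{Greochenig2009}, a result it does not prove, whereas your factorization $\gga(z)=\overline{\ga_{ev}^{n,\nu}(z)}\cdot\bigl(|\ga_{ev}^{n,\nu}|^{2}\bigr)^{-1}(z)$ --- with $|\ga_{ev}^{n,\nu}|^{2}$ the symbol of a real, banded, strictly positive autocorrelation mask to which Corollary~\ref{cor:1-norm-by-2-norm} applies, followed by convolution with a finite mask --- stays entirely within tools established in the paper and yields an explicit decay rate; this is a genuine, if small, improvement. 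You also supply an induction proof of the evaluation $2^{\nu}P_\nu(1/2)=\sum_{j=0}^{\nu}\binom{n/2+\nu}{j}$, which the paper merely asserts. One wording fix: in (2), for odd $n$ the symbol $\ga_{ev}^{n,\nu}$ is complex-valued on $\T$, so invoke Wiener's lemma from nonvanishing, i.e.\ $|\ga_{ev}^{n,\nu}|>0$ on $\T$, rather than from ``strict positivity'' of the symbol itself; nothing else in your argument is affected.
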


\bigskip
The proof of Theorem~\ref{thm:pseudo-spline:inverse} is postponed to the Appendix. We
remark that
\begin{itemize}
\item[\rm(i)]
when $\nu=0$, we have $\|A_\gga\|_2 = 2^{\lfloor (n-1)/2\rfloor}$;

\item[\rm(ii)]
when $n=2k$ and $\nu=k-1$, we have $\gga(z) \equiv 1$.


\end{itemize}

\section{Decay and stability}
\label{sec:dec:stab}
 We now turn to the study of the decay property of the pyramid  sequences and the stability of the transforms.

\subsection{Decay}
\label{subsec:decay}

Let $\Delta: l(\Z)\rightarrow l(Z)$ denote the \emph{difference operator}: $(\Delta c)_k = c_{k+1}-c_k$ for $c\in l(\Z)$. Then, $\Delta$  commutes with convolution operators. Indeed, by
\[
[\Delta c](z) = (z^{-1}-1) c(z)\mbox{~and~}[\gga*c](z)=\gga(z)c(z),
\]
we have
\[
[\Delta (\gga*c)](z) =(z^{-1}-1)(\gga(z)c(z)) = \gga(z)[(z^{-1}-1)c(z)] = [\gga* (\Delta c)](z).
\]

Suppose the data sequence $c^{(j)} = f\big|_{2^{-j}\Z}$, where $f$ is a  function in  $C^1(\R)$ with bounded first derivative. Then,
\[
[\Delta c^{(j)}]_k=f(2^{-j}(k+1))-f(2^{-j}k) =f'(\xi)\cdot 2^{-j}
\]
for some $\xi\in(2^{-j}k,2^{-j}(k+1))$. Thus
$\|\Delta c^{(j)}\|_\infty \le K 2^{-j}$
with $K=\|f'\|_\infty$.


Applying the scheme in \eqref{def:NewSubd:forward}, we have a pyramid of data consisting of approximation coefficient sequences $c^{(\ell)}$ and detail coefficient sequences $d^{(\ell)}$. The following two results concern the decay property of $\Delta c^{(\ell)}$ and $d^{(\ell)}$.
\begin{theorem}[Difference of approximation coefficients]
\label{thm:decay:c}
Let $c^{(j)}$ be such that $\|\Delta c^{(j)}\|_\infty\le K2^{-j}$. Let $c^{(\ell)} := \gga * (c^{(\ell+1)}\downarrow 2)$ with $\gga\in l_1(\Z)$ and $0\le\ell\le j-1$. Then,
\begin{equation}
\label{decay:delta_c}
\|\Delta c^{(\ell )}\|_\infty \le K\: \|\gga\|_1^j\cdot (2\|\gga\|)^{-\ell},\quad 0\le \ell\le j-1.
\end{equation}
\end{theorem}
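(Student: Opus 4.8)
The plan is to establish a single-step recursion that controls $\|\Delta c^{(\ell)}\|_\infty$ in terms of $\|\Delta c^{(\ell+1)}\|_\infty$, and then to iterate it downward from the top level $j$ to the level $\ell$, feeding in the hypothesis $\|\Delta c^{(j)}\|_\infty\le K2^{-j}$ only once, at the start.

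Three elementary ingredients drive the recursion. First, the commutation identity $\Delta(\gga*c)=\gga*(\Delta c)$, already verified just above the statement, so that $\Delta c^{(\ell)}=\gga*\Delta\big(c^{(\ell+1)}\downarrow 2\big)$. Second, the estimate $\|\gga*b\|_\infty\le\|\gga\|_1\,\|b\|_\infty$, which is immediate from the definition of convolution and bounds the effect of applying the mask. Third, a bound for the interaction of the difference operator with decimation: writing out $\big(\Delta(c\downarrow 2)\big)_k=c_{2k+2}-c_{2k}=(\Delta c)_{2k}+(\Delta c)_{2k+1}$ and applying the triangle inequality gives $\|\Delta(c\downarrow 2)\|_\infty\le 2\|\Delta c\|_\infty$. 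Composing the three ingredients yields the one-step bound $\|\Delta c^{(\ell)}\|_\infty\le 2\|\gga\|_1\,\|\Delta c^{(\ell+1)}\|_\infty$.

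Iterating this inequality $j-\ell$ times and invoking the hypothesis at level $j$ gives $\|\Delta c^{(\ell)}\|_\infty\le(2\|\gga\|_1)^{\,j-\ell}\,\|\Delta c^{(j)}\|_\infty\le(2\|\gga\|_1)^{\,j-\ell}\,K2^{-j}$. A short rearrangement, using that the Wiener-algebra norm $\|\gga\|$ coincides with $\|\gga\|_1$ under the isomorphism $W(\T)\cong l_1(\Z)$, rewrites the right-hand side as $K\,\|\gga\|_1^{\,j}\,(2\|\gga\|)^{-\ell}$, which is precisely \eqref{decay:delta_c}.

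There is no genuine obstacle here; the only point demanding care is the decimation step, where one might naively hope for a constant $1$ rather than $2$. The telescoping identity $c_{2k+2}-c_{2k}=(\Delta c)_{2k}+(\Delta c)_{2k+1}$ shows why the factor $2$ is the correct constant, and it is exactly the resulting factor $2^{\,j-\ell}$ accumulated by the iteration that, after cancelling against the $2^{-j}$ supplied by the hypothesis, leaves the $2^{-\ell}$ appearing in the final estimate.
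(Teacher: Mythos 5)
Your proposal is correct and follows essentially the same route as the paper's proof: the same commutation identity $\Delta(\gga*c)=\gga*(\Delta c)$, the same bound $\|\gga*b\|_\infty\le\|\gga\|_1\|b\|_\infty$, the same telescoping estimate $\|\Delta(c\downarrow 2)\|_\infty\le 2\|\Delta c\|_\infty$, and the same downward iteration from level $j$. Your closing rearrangement, identifying $(2\|\gga\|_1)^{j-\ell}K2^{-j}$ with $K\|\gga\|_1^{j}(2\|\gga\|)^{-\ell}$ via $\|\gga\|=\|\gga\|_1$, merely makes explicit a step the paper leaves implicit.
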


\begin{proof}
For $c_{ev} = c\downarrow 2$, we have
\[
(\Delta c_{ev})_k= c_{2k+2}-c_{2k}= (c_{2k+2}-c_{2k+1})+(c_{2k+1}-c_{2k})=(\Delta c)_{2k+1}+(\Delta c)_{2k},\quad k\in\Z,
\]
and hence, $\|\Delta c_{ev}\|_\infty \le 2\|\Delta c\|_\infty$. Since $\Delta$ commutes with  convolution operators, we have
\[
\begin{aligned}
\|\Delta c^{(j-1)}\|_\infty
&  =  \|\Delta(\gga* (c^{(j)}\downarrow 2))\|_\infty
\\&=\| \gga* \Delta(c^{(j)}\downarrow 2))\|_\infty
\\&\le \|\gga\|_1\cdot \|  \Delta(c^{(j)}\downarrow 2)\|_\infty
\\&\le\|\gga\|_1\cdot 2\|\Delta c^{(j)}\|_\infty.
\end{aligned}
\]
Iterating the above inequality starting with $\|\Delta c^{(j)}\|_\infty\le K2^{-j}$, we conclude \eqref{decay:delta_c}.
\end{proof}

Under some very mild conditions  on the masks $\ga, \gga$, we can show that the detail coefficients have the same decay property as the differences of the approximation coefficients.

\begin{theorem}[Detail  coefficients]
\label{thm:decay:d}
Let $c^{(j)}\in l_\infty(\Z)$ be a sequence. Define $c^{(\ell-1)}$ and $d^{(\ell)}$ to be
\[
c^{(\ell-1)} :=\gga*(c^{(\ell)}\downarrow2), \quad
d^{(\ell)} := c^{(\ell)}- \Sd_\ga c^{(\ell-1)},
\quad 1\le \ell \le j,
\]
where $\ga,\gga\in l_1(\Z)$ are masks satisfying
\begin{equation}\label{cond:filter:normalization}
\sum_{k} \ga_{2k} = \sum_k \ga_{2k+1}=\sum_k\gga_k=1
\end{equation}
and
\begin{equation}
\label{cond:filter:decay}
\sum_{k\in\Z}|\ga_k| \;|k|=K_\ga<\infty,\quad
2\sum_{k\in\Z}|\gga_{k}| \;|k|=K_\gga < \infty.
\end{equation}
 Then,
\begin{equation}
\label{decay:detail}
\|d^{(\ell)}\|_\infty\le K_{\ga,\gga} \|\Delta c^{(\ell)}\|_\infty,\quad 1\le \ell\le j
\vspace{0.06in}
\end{equation}
with $K_{\ga,\gga} := K_\gga\|\ga\|_1+K_\ga \|\gga\|_1$. In particular, if  $\;\|\Delta c^{(j)}\|_\infty\le K \cdot  2^{-j}$ for some $K$ independent of $j$, then,
\begin{equation}
\label{decay:detail2}
\|d^{(\ell)}\|_\infty\le \left(K \cdot K_{\ga,\gga}\cdot \|\gga\|_1^{j} \right)\cdot (2\|\gga\|)^{-\ell}, \quad 1\le \ell \le j.
\end{equation}
\end{theorem}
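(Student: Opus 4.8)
The plan is to prove the pointwise bound \eqref{decay:detail} directly, working in the physical (sequence) domain; the ``in particular'' estimate \eqref{decay:detail2} then follows at once. Indeed, the recursion $c^{(\ell-1)}:=\gga*(c^{(\ell)}\downarrow2)$ is exactly the one in Theorem~\ref{thm:decay:c}, so under the hypothesis $\|\Delta c^{(j)}\|_\infty\le K\,2^{-j}$ that theorem gives $\|\Delta c^{(\ell)}\|_\infty\le K\,\|\gga\|_1^j(2\|\gga\|)^{-\ell}$ for $1\le\ell\le j$ (the case $\ell=j$ being the hypothesis itself); multiplying by the constant $K_{\ga,\gga}$ turns \eqref{decay:detail} into \eqref{decay:detail2}. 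So I would focus on \eqref{decay:detail}. Writing $c:=c^{(\ell)}$ and expanding the two convolutions in $d^{(\ell)}=c-\Sd_\ga\Dm_\gga c$ componentwise, the detail coefficient becomes
\[
d_k=c_k-(\Sd_\ga\Dm_\gga c)_k=c_k-\sum_{m,n\in\Z}\ga_{k-2m}\,\gga_n\,c_{2m-2n},\qquad k\in\Z .
\]

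The crucial observation is that the operator $\Id-\Sd_\ga\Dm_\gga$ annihilates constant sequences, and this is precisely where both normalization conditions in \eqref{cond:filter:normalization} are used. For fixed $k$ the inner sum over the translates of $\ga$ is $\sum_{m}\ga_{k-2m}=\sum_{i\equiv k\,(2)}\ga_i$, and this equals $1$ whether $k$ is even or odd exactly because $\sum_k\ga_{2k}=\sum_k\ga_{2k+1}=1$; combined with $\sum_n\gga_n=1$ it yields $\sum_{m,n}\ga_{k-2m}\gga_n=1$. Subtracting $c_k$ times this identity lets me rewrite
\[
d_k=\sum_{m,n\in\Z}\ga_{k-2m}\,\gga_n\,\bigl(c_k-c_{2m-2n}\bigr),\qquad k\in\Z,
\]
which replaces the raw values of $c$ by its increments.

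The final step is a telescoping estimate together with a moment split. Summing $\Delta c$ along consecutive indices gives $|c_k-c_{2m-2n}|\le|k-2m+2n|\,\|\Delta c\|_\infty$, and the triangle inequality $|k-2m+2n|\le|k-2m|+2|n|$ separates the $\ga$- and $\gga$-contributions. Inserting this and factoring the double sum produces two products: one of the form $\bigl(\sum_{m}|\ga_{k-2m}|\,|k-2m|\bigr)\,\|\gga\|_1$ and one of the form $\|\ga\|_1\bigl(2\sum_n|\gga_n|\,|n|\bigr)$, subject to the parity restriction on the $\ga$-index. Bounding each restricted $\ga$-sum by the full one, $\sum_m|\ga_{k-2m}|\,|k-2m|\le K_\ga$ and $\sum_m|\ga_{k-2m}|\le\|\ga\|_1$, and recalling the definitions of $K_\ga,K_\gga$ in \eqref{cond:filter:decay}, I get $|d_k|\le(K_\ga\|\gga\|_1+K_\gga\|\ga\|_1)\,\|\Delta c\|_\infty$ uniformly in $k$, which is \eqref{decay:detail} with the stated constant. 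This proof is short, so I do not expect a genuine obstacle; the two points needing care are respecting the parity constraint $i\equiv k\,(2)$ when collapsing the $\ga$-sums (so that one truly invokes both halves of the normalization, not merely $\sum_k\ga_k=2$), and performing the triangle-inequality split so that the weight $|n|$ attaches to $\gga$ and the weight $|k-2m|$ to $\ga$; both then dissolve cleanly into $K_{\ga,\gga}$.
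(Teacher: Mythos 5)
Your proposal is correct and follows essentially the same route as the paper's proof: the same rewriting of $d^{(\ell)}_k$ as a double sum against increments $c_k-c_{2m-2n}$ using both normalization conditions, the same triangle-inequality split $|k-2m+2n|\le|k-2m|+2|n|$ attaching the weights to $\ga$ and $\gga$ respectively, and the same appeal to Theorem~\ref{thm:decay:c} for \eqref{decay:detail2}. Your explicit attention to the parity constraint in $\sum_m\ga_{k-2m}=1$ and to uniformity in $k$ is if anything slightly more careful than the paper's write-up.
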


\begin{proof}
Let $\eta =\{\eta_k\}_{k\in\Z}:=c^{(\ell-1)}=\gga*(c^{(\ell)}\downarrow 2)$. Then, $\eta_k = \sum_{s} \gga_{k-s} c_{2s}^{(\ell)}$. By \eqref{cond:filter:normalization}, we have
\[
\begin{aligned}
d^{(\ell)}_k &=c_k^{(\ell)} - [\Sd_\ga \eta]_k
=c_k^{(\ell)}-\sum_{s} \ga_{k-2s} \eta_s
= \sum_{s} \ga_{k-2s}\Big[\sum_{n}\gga_{s-n}(c_k^{(\ell)}-c_{2n}^{(\ell)})\Big].
\end{aligned}
\]
Consequently,
\[
\begin{aligned}
\|d^{(\ell)}\|_\infty
&\le \sum_{s}|\ga_{k-2s}|\Big[\sum_{n} |\gga_{s-n}||2n-k|\Big]\|\Delta c^{(\ell)}\|_\infty
\\&
\le  \sum_{s}|\ga_{k-2s}|\Big[\sum_{n} |\gga_{s-n}|(|2n-2s|+|k-2s|)\Big]\|\Delta c^{(\ell)}\|_\infty
\\&\le \sum_{s}|\ga_{k-2s}|\Big[K_\gga+|k-2s|\|\gga\|_1\Big]\|\Delta c^{(\ell)}\|_\infty
\\&=(K_\gga\|\ga\|_1+K_\ga \|\gga\|_1)\|\Delta c^{(\ell)}\|_\infty
\\&=K_{\ga,\gga}\|\Delta c^{(\ell)}\|_\infty
\\&\le  K \cdot K_{\ga,\gga}\cdot \|\gga\|_1^{j} \cdot (2\|\gga\|)^{-\ell},
\end{aligned}
\]
where the last inequality follows from \eqref{decay:delta_c}.
\end{proof}

\medskip
Theorems~\ref{thm:decay:c} and \ref{thm:decay:d} imply

\begin{corollary}
\label{cor:56}
Let $\ga\in l_0(\Z)$ be a mask of a convergent subdivision scheme satisfying $\ga_{ev}(z)>0$ for $z\in\T$, and let $c^{(j)}$ be a sequence satisfying $\|\Delta c^{(j)}\|_\infty\le K 2^{-j}$. Then the pyramid generated by the MTER in \eqref{def:NewSubd:forward2} satisfies \eqref{decay:delta_c} and \eqref{decay:detail2}.
\end{corollary}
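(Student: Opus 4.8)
The plan is to recognize the MTER of \eqref{def:NewSubd:forward2} as the particular instance of the generic pyramid analyzed in Theorems~\ref{thm:decay:c} and~\ref{thm:decay:d} in which the decimation mask is the even-inverse $\gga=\ga_{ev}^{-1}$, and then merely to check that the hypotheses of those two theorems are satisfied under the present assumptions. Indeed, the first line of \eqref{def:NewSubd:forward2} reads $c^{(\ell-1)}(z)=\ga_{ev}^{-1}(z)\,c_{ev}^{(\ell)}(z)$, which, since $[c^{(\ell)}\downarrow2](z)=c_{ev}^{(\ell)}(z)$, is precisely $c^{(\ell-1)}=\gga*(c^{(\ell)}\downarrow2)$ with $\gga=\ga_{ev}^{-1}$; likewise $d^{(\ell)}=c^{(\ell)}-\Sd_\ga c^{(\ell-1)}$ matches the detail formula. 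Thus both theorems apply verbatim once their assumptions are verified.

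First I would confirm that $\gga=\ga_{ev}^{-1}$ is well defined and lies in $l_1(\Z)$. Since $\ga\in l_0(\Z)$, its even part $\ga_{ev}$ is finitely supported, and by hypothesis $\ga_{ev}(z)>0$ (in particular nonzero) on $\T$; hence Wiener's lemma, in the form of Corollary~\ref{cor:existence}, gives $\gga\in l_1(\Z)$ with $\gga(z)=1/\ga_{ev}(z)$. This supplies the sole hypothesis $\gga\in l_1(\Z)$ of Theorem~\ref{thm:decay:c}, so \eqref{decay:delta_c} follows at once from that theorem and the standing bound $\|\Delta c^{(j)}\|_\infty\le K2^{-j}$.

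The bulk of the work is to verify the normalization conditions \eqref{cond:filter:normalization} and the decay conditions \eqref{cond:filter:decay} needed for Theorem~\ref{thm:decay:d}. For the normalization of $\ga$, I would invoke convergence of the subdivision scheme, which forces the sum rules $\ga(1)=2$ and $\ga(-1)=0$; combined with $\ga(1)=\sum_k\ga_{2k}+\sum_k\ga_{2k+1}$ and $\ga(-1)=\sum_k\ga_{2k}-\sum_k\ga_{2k+1}$, these give $\sum_k\ga_{2k}=\sum_k\ga_{2k+1}=1$. In particular $\ga_{ev}(1)=\sum_k\ga_{2k}=1$, whence $\gga(1)=1/\ga_{ev}(1)=1$, i.e. $\sum_k\gga_k=1$, completing \eqref{cond:filter:normalization}. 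For \eqref{cond:filter:decay}, the finiteness of $K_\ga=\sum_k|\ga_k|\,|k|$ is immediate from the finite support of $\ga$.

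The only genuinely nontrivial point is the finiteness of $K_\gga$, and here lies the main (mild) obstacle: one must convert the abstract summability requirement $2\sum_k|\gga_k|\,|k|<\infty$ into something verifiable. The key is to apply Corollary~\ref{cor:1-norm-by-2-norm} to the mask $\ga_{ev}$, which is finitely supported, hence $s$-banded for some $s$, and satisfies $\ga_{ev}(z)>0$ on $\T$; this yields $|\gga_\ell|\le C\lambda^{|\ell|}$ for some $C$ and some $\lambda\in(0,1)$. Exponential decay then gives $2\sum_k|\gga_k|\,|k|\le 2C\sum_k|k|\lambda^{|k|}<\infty$, so $K_\gga<\infty$. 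With \eqref{cond:filter:normalization} and \eqref{cond:filter:decay} in hand, Theorem~\ref{thm:decay:d} delivers \eqref{decay:detail2} directly, which together with \eqref{decay:delta_c} completes the proof. Everything beyond the invocation of Corollary~\ref{cor:1-norm-by-2-norm} is routine bookkeeping.
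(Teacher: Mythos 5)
Your proposal is correct and follows exactly the route the paper intends: the paper states Corollary~\ref{cor:56} as an immediate consequence of Theorems~\ref{thm:decay:c} and~\ref{thm:decay:d} without spelling out the verification, and you supply precisely the missing bookkeeping --- $\gga=\ga_{ev}^{-1}\in l_1(\Z)$ via Wiener's lemma, the sum rules $\ga(1)=2$, $\ga(-1)=0$ from convergence giving \eqref{cond:filter:normalization}, and $K_\gga<\infty$ from the exponential decay in Corollary~\ref{cor:1-norm-by-2-norm} applied to the $s$-banded, positive mask $\ga_{ev}$. Nothing in your argument deviates from or adds to the paper's implicit reasoning in any essential way, so there is nothing further to fix.
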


We remark that in the interpolatory case, \eqref{eq:intpSymb} holds. Therefore, inequalities \eqref{decay:delta_c} and \eqref{decay:detail2} depend on the level $\ell$ and are independent of $j$. For non-interpolatory subdivision operators and for a fixed $j$, the decay of the details is faster but the constant is bigger since $\|\gga\|_1>1$.


\subsection{Stability of reconstruction}
\label{subsec:stab:rec}
In this subsection,
we show that   the reconstruction is stable.
\begin{theorem}
Suppose $\sup_{j\in\N}\|\Sd_\ga^j\|_\infty\le K$ for some constant $K>0$. Then the reconstructed data $c^{(j)}$ at level $j$ from coarse data $c^{(0)}$ and details $d^{1},\ldots, d^{(j)}$ via \eqref{def:reconstruction} is stable; that is,
\[
\|c^{(j)}-\wt{c}^{(j)}\|_\infty\le K\left(\|c^{(0)}-\wt{c}^{(0)}\|_\infty+\sum_{\ell=1}^{j}\|d^{(\ell)}-\wt{d}^{(\ell)}\|_\infty\right),
\]
where $\wt{c}^{(j)}$ is reconstructed from the data $\wt{c}^{(0)}$, $\wt{d}^{1}$, $\ldots$, $\wt{d}^{(j)}$ via \eqref{def:reconstruction}.
\end{theorem}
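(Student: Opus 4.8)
The plan is to unroll the reconstruction recursion \eqref{def:reconstruction} into closed form and then exploit the linearity of the powers of $\Sd_\ga$. First I would establish, by induction on $\ell$, the explicit representation
\[
c^{(\ell)} = \Sd_\ga^{\ell} c^{(0)} + \sum_{m=1}^{\ell} \Sd_\ga^{\ell-m} d^{(m)}, \quad \ell = 0,1,\ldots, j.
\]
The base case $\ell=0$ is trivial (the sum is empty), and the inductive step follows at once by applying $\Sd_\ga$ to the representation at level $\ell-1$ and adding $d^{(\ell)}$, using $\Sd_\ga \Sd_\ga^{m} = \Sd_\ga^{m+1}$ together with $\Sd_\ga^{0} = \Id$. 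The identical identity holds for the reconstructed data $\wt{c}^{(\ell)}$ with $c^{(0)}, d^{(m)}$ replaced by $\wt{c}^{(0)}, \wt{d}^{(m)}$.

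Next, since each power $\Sd_\ga^{m}$ is a linear operator, subtracting the two representations at level $j$ gives
\[
c^{(j)} - \wt{c}^{(j)} = \Sd_\ga^{j}\big(c^{(0)}-\wt{c}^{(0)}\big) + \sum_{m=1}^{j}\Sd_\ga^{j-m}\big(d^{(m)}-\wt{d}^{(m)}\big).
\]
Taking the $\|\cdot\|_\infty$ norm and applying the triangle inequality together with the bound $\|\Sd_\ga^{m} v\|_\infty \le \|\Sd_\ga^{m}\|_\infty \|v\|_\infty$ yields
\[
\|c^{(j)}-\wt{c}^{(j)}\|_\infty \le \|\Sd_\ga^{j}\|_\infty\,\|c^{(0)}-\wt{c}^{(0)}\|_\infty + \sum_{m=1}^{j}\|\Sd_\ga^{j-m}\|_\infty\,\|d^{(m)}-\wt{d}^{(m)}\|_\infty.
\]
The exponents $j-m$ range over $0,1,\ldots,j-1$, so every factor $\|\Sd_\ga^{j-m}\|_\infty$ is controlled by the uniform bound $K$ from the hypothesis $\sup_{j\in\N}\|\Sd_\ga^{j}\|_\infty\le K$, and the asserted estimate follows immediately.

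The only point requiring a word of care is the term with $m=j$, which carries the factor $\Sd_\ga^{0}=\Id$ of norm exactly $1$. If the convention for $\N$ in the hypothesis excludes $0$, one simply observes that $K\ge 1$ necessarily holds for any subdivision operator reproducing constants: the constant sequence is fixed by $\Sd_\ga$, forcing $\|\Sd_\ga\|_\infty\ge 1$ and hence $\|\Sd_\ga^{m}\|_\infty\ge 1$ for all $m$, so $\|\Id\|_\infty=1\le K$ as well. Apart from this bookkeeping, the argument is entirely mechanical and I do not expect a genuine obstacle; the content is concentrated in the clean closed-form unrolling of \eqref{def:reconstruction} and in recognizing that uniform power-boundedness of $\Sd_\ga$ is precisely the hypothesis that absorbs all the intermediate operator norms into the single constant $K$.
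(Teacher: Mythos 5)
Your proposal is correct and follows essentially the same route as the paper: the paper likewise unrolls the recursion \eqref{def:reconstruction} into $c^{(j)}-\wt{c}^{(j)}=\Sd_\ga^j(c^{(0)}-\wt{c}^{(0)})+\sum_{\ell=1}^j\Sd_\ga^{j-\ell}(d^{(\ell)}-\wt{d}^{(\ell)})$ and applies the triangle inequality with the uniform bound $\|\Sd_\ga^m\|_\infty\le K$. Your closing remark about the $m=j$ term (where $\Sd_\ga^0=\Id$ forces $K\ge 1$) is a point the paper glosses over silently, so your version is if anything slightly more careful.
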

\begin{proof}
From $c^{(\ell)} = \Sd_\ga c^{(\ell-1)}+d^{(\ell)}$, we have
\[
\begin{aligned}
\|c^{(j)}-\wt{c}^{(j)}\|_\infty
&=\|\Sd_\ga (c^{(j-1)}-\wt{c}^{(j-1)})+(d^{(j)}-\wt{d}^{(j)})\|_\infty\\
&=\|\Sd_\ga^2 (c^{(j-2)}-\wt{c}^{(j-2)})+\Sd_\ga(d^{(j-1)}-\wt{d}^{(j-1)})+(d^{(j)}-\wt{d}^{(j)}) \|_\infty\\
& \;\; \vdots\\
&=\Big\|\Sd_\ga^j (c^{(0)}-\wt{c}^{(0)})+\sum_{\ell=1}^j\Sd_\ga^{j-\ell}(d^{(\ell)}-\wt{d}^{(\ell)}) \Big\|_\infty\\
&\le \|\Sd_\ga^j\|_\infty\|c^{(0)}-\wt{c}^{(0)}\|_\infty+\sum_{\ell=1}^j\|\Sd_\ga^{j-\ell}\|_\infty\|d^{(\ell)}-\wt{d}^{(\ell)}\|_\infty\\
&\le K\left(\|c^{(0)}-\wt{c}^{(0)}\|_\infty+\sum_{\ell=1}^{j}\|d^{(\ell)}-\wt{d}^{(\ell)}\|_\infty\right).
\end{aligned}
\]
\end{proof}

It is well known and easy to see from the uniform boundedness principle that the condition 
\[
\sup_{j\in\N}\|\Sd_\ga^j\|_\infty\le K
\] 
holds whenever the subdivision scheme based on $\Sd_\ga$ is convergent.

\subsection{Stability of  decomposition}
\label{subsec:stab:dec}
We now  show  stability of the decomposition  of the MTER in \eqref{def:NewSubd:forward2} for fixed $j$.

\begin{theorem}
\label{thm:dec:stab}
Suppose $c^{(j)}, \wt{c}^{(j)}\in l_p(\Z)$ for $p\in[1,\infty]$. Let the the pyramid data  $\{c^{(0)}; d^{(1)},\ldots, d^{(j)}\}$
and $\{\wt{c}^{(0)}; \wt{d}^{(1)},\ldots, \wt{d}^{(j)}\}$ be otained from $c^{(j)}$ and $\wt{c}^{(j)}$ respectively by the scheme \eqref{def:NewSubd:forward}. Then, we have for $\ell=1,\ldots, j$,
\begin{equation}
\label{dec:stab:lp}
\begin{aligned}
 \|c^{(0)}-\wt{c}^{(0)}\|_p &\le \|\Dm_\gga\|_p^j\cdot \|c^{(j)}-\wt{c}^{(j)}\|_p,\\
 \|d^{(\ell)}-\wt{d}^{(\ell)}\|_p & \le \left(\|\Id-\Sd_\ga \Dm_\gga\|_p \cdot \|\Dm_\gga\|_p^{j} \cdot \|c^{(j)}-\wt{c}^{(j)}\|_p\right)\cdot \|\Dm_\gga\|_p^{-\ell}.
 \end{aligned}
\end{equation}
\end{theorem}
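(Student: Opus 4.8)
The plan is to reduce everything to a single identity expressing each pyramid sequence as a power of the linear decimation operator $\Dm_\gga$ applied to the initial data, and then to invoke submultiplicativity of the $l_p$ operator norm. First I would unwind the recursion in \eqref{def:NewSubd:forward}: since $c^{(\ell-1)} = \Dm_\gga c^{(\ell)}$ for $\ell = j, j-1, \ldots, 1$, iterating gives $c^{(\ell)} = \Dm_\gga^{\,j-\ell} c^{(j)}$ for $0 \le \ell \le j$, and in particular $c^{(0)} = \Dm_\gga^{\,j} c^{(j)}$. Likewise, each detail sequence satisfies
\[
d^{(\ell)} = c^{(\ell)} - \Sd_\ga \Dm_\gga c^{(\ell)} = (\Id - \Sd_\ga \Dm_\gga)\,\Dm_\gga^{\,j-\ell} c^{(j)},
\]
using $c^{(\ell-1)}=\Dm_\gga c^{(\ell)}$.

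Next, because $\Dm_\gga$, $\Sd_\ga$, and hence $\Id - \Sd_\ga \Dm_\gga$ are all \emph{linear} operators, the same identities hold with $c^{(j)}$ replaced by $\wt{c}^{(j)}$, so subtracting yields
\[
c^{(0)} - \wt{c}^{(0)} = \Dm_\gga^{\,j}(c^{(j)} - \wt{c}^{(j)}), \qquad d^{(\ell)} - \wt{d}^{(\ell)} = (\Id - \Sd_\ga \Dm_\gga)\,\Dm_\gga^{\,j-\ell}(c^{(j)} - \wt{c}^{(j)}).
\]
Taking $\|\cdot\|_p$ and applying submultiplicativity, $\|AB\|_p \le \|A\|_p\|B\|_p$ together with $\|\Dm_\gga^{\,m}\|_p \le \|\Dm_\gga\|_p^{\,m}$, gives the first inequality of \eqref{dec:stab:lp} at once; for the second, writing $\|\Dm_\gga\|_p^{\,j-\ell} = \|\Dm_\gga\|_p^{\,j}\,\|\Dm_\gga\|_p^{-\ell}$ reproduces the claimed form exactly.

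The only point that requires care---and really the sole substantive step---is to confirm that all three operators are bounded on $l_p(\Z)$ for every $p \in [1,\infty]$, so that the norms appearing in the estimates are finite and every $c^{(\ell)}, d^{(\ell)}$ indeed lies in $l_p(\Z)$. For $\Dm_\gga c = \gga*(c\downarrow 2)$, downsampling is norm-nonincreasing ($\|c\downarrow 2\|_p \le \|c\|_p$) and convolution with $\gga \in l_1(\Z)$ is bounded on $l_p$ by Young's inequality, so $\|\Dm_\gga\|_p \le \|\gga\|_1 < \infty$; similarly, since upsampling preserves the $l_p$ norm, $\Sd_\ga c = \ga*(c\uparrow 2)$ satisfies $\|\Sd_\ga\|_p \le \|\ga\|_1 < \infty$. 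I do not expect any genuine obstacle here: once boundedness is in hand, the argument is purely algebraic, and the only thing to track is the index bookkeeping ($j-\ell$ versus the split into $j$ and $-\ell$) that produces the factored constant in the statement.
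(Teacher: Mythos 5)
Your proposal is correct and follows essentially the same route as the paper: the paper iterates the per-level bounds $\|c^{(\ell-1)}-\wt{c}^{(\ell-1)}\|_p \le \|\Dm_\gga\|_p\|c^{(\ell)}-\wt{c}^{(\ell)}\|_p$ and $\|d^{(\ell)}-\wt{d}^{(\ell)}\|_p \le \|\Id-\Sd_\ga\Dm_\gga\|_p\|c^{(\ell)}-\wt{c}^{(\ell)}\|_p$, which is the level-by-level form of your closed identities $c^{(0)}-\wt{c}^{(0)}=\Dm_\gga^{\,j}(c^{(j)}-\wt{c}^{(j)})$ and $d^{(\ell)}-\wt{d}^{(\ell)}=(\Id-\Sd_\ga\Dm_\gga)\Dm_\gga^{\,j-\ell}(c^{(j)}-\wt{c}^{(j)})$ combined with submultiplicativity. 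Your explicit check via Young's inequality that $\|\Dm_\gga\|_p\le\|\gga\|_1$ and $\|\Sd_\ga\|_p\le\|\ga\|_1$ is a detail the paper leaves implicit, but it does not change the argument.
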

\begin{proof}
By
\[
\|c^{(\ell-1)}-\wt{c}^{(\ell-1)}\|_p = \|\Dm_\gga(c^{(\ell)}-\wt{c}^{(\ell)})\|_p\le \|\Dm_\gga\|_p\|c^{(\ell)}-\wt{c}^{(\ell)}\|_p,
\]
we have
\[
\|c^{(0)}-\wt{c}^{(0)}\|_p \le \|\Dm_\gga\|_p^j\|c^{(j)}-\wt{c}^{(j)}\|_p.
\]
Similarly, by
\[
\|d^{(\ell)}-\wt{d}^{(\ell)}\|_p  = \|(\Id-\Sd_\ga\Dm_\gga)(c^{(\ell)}-\wt{c}^{(\ell)})\|_p\le \|\Id-\Sd_\ga\Dm_\gga\|_p\|c^{(\ell)}-\wt{c}^{(\ell)})\|_p,
\]
we have
\[
\|d^{(\ell)}-\wt{d}^{(\ell)}\|_p\le
\|(\Id-\Sd_\ga\Dm_\gga)\|_p\|\Dm_\gga\|_p^{j-\ell}\|c^{(j)}-\wt{c}^{(j)}\|_p.
\]
\end{proof}

Remarks:

\begin{itemize}
\item[(i)]
For $\Sd_\ga$  an interpolatory subdivision operator, \eqref{eq:intpSymb} holds. Therefore in the corresponding MTER, $\Dm_\gga c$  is simply $\Dm_\gga c=c\downarrow 2$ and  $\|\Dm_\gga\|_p\equiv1$. It follows from Theorem~\ref{thm:dec:stab} that the decomposition of MTERs based on  interpolatory subdivision operators is  stable  for all $p\in [1,\infty]$, namely, the constants in \eqref{dec:stab:lp} are independent of $j$.

\item[(ii)]
For non-interpolatory subdivision operators corresponding to convergent even-reversible subdivision schemes, the even-inverse $\Dm_\gamma$ satisfies $\|\Dm_\gga\|_p \ge 1$ since $\gga(1)=1$. In such a case, the constant $\|\Dm_\gga\|_p^j$ depends on the level $j$ of the data.
\end{itemize}


In the  following, we give bounds on  $\|\Dm_\gga\|_\infty=\|\gga\|_1$  for the family of primal pseudo-spline subdivision operators.




\begin{theorem}
\label{thm:1-norm}
Let $\ga^{2k,\nu}$ be the mask as defined in \eqref{def:pseudo-spline} with $0\le \nu\le k-1$  and $k \ge 2$, and let $\gga=(\ga^{2k,\nu}_{ev})^{-1}$. Then,
\[
\|\Dm_\gga\|_\infty=\|\gga\|_1 \le C(k,\nu),
\]
where
\begin{equation}
\label{def:C_k_nu}
C(k,\nu)=\kappa \cdot  \max\left\{1,\frac{(1+\sqrt{\kappa})^2}{2\kappa}\right\}\cdot \frac{1+\lambda}{1-\lambda}
\end{equation}
is a constant depending on $k$ and $\nu$ with
\[
\kappa = \frac{2^{k+\nu-1}}{\sum_{j=0}^{\nu} {k+{\nu} \choose j}},\;
\; \lambda = \left(\frac{\sqrt{\kappa}-1}{\sqrt{\kappa}+1}\right)^{1/s},\;\mbox{and}\; s=\lfloor (k+\nu)/2\rfloor.
\]
\end{theorem}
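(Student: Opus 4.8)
The plan is to read off two ingredients that are already available—a sup/inf description of the symbol $\ga^{2k,\nu}_{ev}$, and the exponential-decay estimate for its inverse supplied by Corollary~\ref{cor:1-norm-by-2-norm}—and then to sum a geometric series. First I would record the elementary identity $\|\Dm_\gga\|_\infty=\|\gga\|_1$. Since $\Dm_\gga c=\gga*(c\downarrow2)$ and downsampling is a contraction on $l_\infty(\Z)$, the estimate $\|\Dm_\gga c\|_\infty\le\|\gga\|_1\,\|c\|_\infty$ is immediate; equality is attained by fixing one output index $k_0$ and taking the even entries of $c$ to be $\mathrm{sgn}(\gga_{k_0-\ell})$ and the odd entries $0$, which forces $(\Dm_\gga c)_{k_0}=\sum_\ell|\gga_{k_0-\ell}|=\|\gga\|_1$. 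Hence it suffices to bound $\|\gga\|_1$ with $\gga=(\ga^{2k,\nu}_{ev})^{-1}$.

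The heart of the proof is to apply Corollary~\ref{cor:1-norm-by-2-norm} with $\ga^{2k,\nu}_{ev}$ playing the role of the mask $\ga$ there, and to match the constants. Three checks are needed. First, the band-width: from \eqref{def:pseudo-spline} with $n=2k$, the factor $z^{-k}(1+z)^{2k}$ is supported on $[-k,k]$ and the degree-$\nu$ polynomial in $z+z^{-1}$ contributes $[-\nu,\nu]$, so $\supp(\ga^{2k,\nu})\subseteq[-(k+\nu),k+\nu]$; extracting the even-indexed entries then shows $\ga^{2k,\nu}_{ev}$ is $s$-banded with $s=\lfloor(k+\nu)/2\rfloor$, exactly as in the statement. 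Second, positivity: $\ga^{2k,\nu}$ is a function of $z+z^{-1}$ alone, hence palindromic, and so is $\ga^{2k,\nu}_{ev}$; its symbol is therefore real on $\T$, and by Theorem~\ref{thm:pseudo-spline:inverse} it never vanishes while $\ga^{2k,\nu}_{ev}(1)=1>0$, so continuity forces $\ga^{2k,\nu}_{ev}(z)>0$ on all of $\T$. Third, the identification of $\kappa$: Theorem~\ref{thm:pseudo-spline:inverse}(1) gives $\sup_{z\in\T}\ga^{2k,\nu}_{ev}(z)=1$, while by \eqref{eq:2-norm} one has $\sup_{z\in\T}|\gga(z)|=\|A_\gga\|_2$, so the value of $\|A_\gga\|_2$ from Theorem~\ref{thm:pseudo-spline:inverse}(3) (with $n=2k$) yields $\inf_{z\in\T}\ga^{2k,\nu}_{ev}(z)=1/\|A_\gga\|_2=\big(\sum_{j=0}^\nu\binom{k+\nu}{j}\big)/2^{k+\nu-1}$. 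Consequently $\kappa=\sup/\inf$ equals the quantity in the theorem, and Corollary~\ref{cor:1-norm-by-2-norm} delivers $|\gga_\ell|\le K\lambda^{|\ell|}$ with $\lambda$ as stated and $K=\kappa\max\{1,(1+\sqrt{\kappa})^2/(2\kappa)\}$.

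Finally I would sum: $\|\gga\|_1=\sum_{\ell\in\Z}|\gga_\ell|\le K\sum_{\ell\in\Z}\lambda^{|\ell|}=K\big(1+2\sum_{\ell\ge1}\lambda^\ell\big)=K\,\tfrac{1+\lambda}{1-\lambda}=C(k,\nu)$, which is precisely \eqref{def:C_k_nu}. The only point that genuinely requires care—and the closest thing to an obstacle—is ensuring $\lambda<1$, i.e. $\kappa>1$, so that the geometric series converges. Because $\nu\le k-1$ places $\nu$ strictly below the middle index of the row $\binom{k+\nu}{\cdot}$, the partial sum satisfies $\sum_{j=0}^\nu\binom{k+\nu}{j}<2^{k+\nu-1}$, giving $\kappa>1$ for $\nu<k-1$; in the borderline interpolatory case $\nu=k-1$ one has $\kappa=1$, $\lambda=0$, and the bound holds trivially (indeed $\gga(z)\equiv1$). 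Beyond this, the argument is pure bookkeeping: the substantive content is entirely carried by Corollary~\ref{cor:1-norm-by-2-norm} and Theorem~\ref{thm:pseudo-spline:inverse}, and the correct matching of their constants to $\kappa$, $s$, and $\lambda$.
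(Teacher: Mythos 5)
Your proposal is correct and follows essentially the same route as the paper's proof: positivity and $s$-bandedness of $\ga^{2k,\nu}_{ev}$ feed Corollary~\ref{cor:1-norm-by-2-norm}, the constants $\kappa$, $s$, $\lambda$ are matched via Theorem~\ref{thm:pseudo-spline:inverse} exactly as the paper does, and the geometric series is summed to yield $C(k,\nu)$. One small correction to a side remark: $\lambda<1$ is automatic for every finite $\kappa\ge1$, since $q=(\sqrt{\kappa}-1)/(\sqrt{\kappa}+1)<1$ always; what $\kappa>1$ governs is $\lambda>0$, so your stated equivalence is off, but your separate treatment of the case $\nu=k-1$ renders this harmless.
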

\bigskip

The proof of Theorem~\ref{thm:1-norm} is given in the Appendix. We remark that
\begin{itemize}
\item[(i)] For the case $\nu=k-1$, $\ga^{2k,k-1}$ corresponds to the family of Deslauries-Dubuc's interpolatory masks \cite{DD}. In such a case, the constant $C(k,k-1)$ is exact in the sense that $C(k,k-1)=1$.

\item[(ii)] For the case $\nu=0$, $\ga^{2k,0}$ corresponds to the family of spline masks of even order. In such a case, it can be shown that  $C(k,0) = O(k\cdot 2^{\frac{3k}{2}} ) $ for $k\rightarrow \infty$.
\end{itemize}

\medskip

Combining the above results, we have the following result regarding the  stability  of decomposition of the MTER based on primal pseudo-spline masks in $l_p(\Z)$ for two important cases $p=2$ and $p=\infty$.

\begin{corollary}
\label{cor:dec:stab}
Suppose $c^{(j)}, \wt{c}^{(j)}\in l_p(\Z)$ for $p\in[1,\infty]$. Let the the pyramid data  $\{c^{(0)}; d^{(1)},\ldots, d^{(j)}\}$
and $\{\wt{c}^{(0)}; \wt{d}^{(1)},\ldots, \wt{d}^{(j)}\}$ be obtained from $c^{(j)}$ and $\wt{c}^{(j)}$ respectively by the scheme \eqref{def:NewSubd:forward} with $\ga=\ga^{2k,\nu}$ for $0\le \nu\le k-2$ and $k\ge 2$. Then,
\begin{itemize}
\item[\rm{(1)}] for $p=\infty$, we have\begin{equation}
\label{dec:stab:linfty:spline}
\begin{aligned}
\|c^{(0)}-\wt{c}^{(0)}\|_\infty & \le C(k,\nu)^j \cdot\|c^{(j)}-\wt{c}^{(j)}\|_\infty,\\
\|d^{(\ell)}-\wt{d}^{(\ell)}\|_\infty & \le \left(\|\Id-\Sd_{\ga^{2k,\nu}}\Dm_\gga\|_\infty \cdot C(k,\nu)^{j}\cdot \|c^{(j)}-\wt{c}^{(j)}\|_\infty\right)\cdot C(k,\nu)^{-\ell}\\
\end{aligned}
\end{equation}
for $\ell = 1,\ldots, j$, where $C(k,\nu)$ is the constant  defined  in \eqref{def:C_k_nu}.

\item[\rm{(2)}] For $p=2$, \eqref{dec:stab:lp} holds with $\|\Dm_\gga\|_2=\|A_\gga\|_2 = \frac{2^{k+\nu-1}}{\sum_{j=0}^\nu {k+\nu\choose j}}$.
%
\end{itemize}
\end{corollary}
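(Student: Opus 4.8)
The plan is to read off both parts as specializations of the general stability estimate \eqref{dec:stab:lp} of Theorem~\ref{thm:dec:stab}, replacing the abstract operator norm $\|\Dm_\gga\|_p$ by the explicit quantities supplied by Theorems~\ref{thm:1-norm} and \ref{thm:pseudo-spline:inverse}. The hypotheses of Theorem~\ref{thm:dec:stab}, namely $c^{(j)},\wt c^{(j)}\in l_p(\Z)$ and that the two pyramids are built by \eqref{def:NewSubd:forward}, are exactly those assumed here, so \eqref{dec:stab:lp} is available for every $p\in[1,\infty]$; it then remains only to insert the correct value of $\|\Dm_\gga\|_p$ for $p=\infty$ and $p=2$.

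For part~(1) I take $p=\infty$ in \eqref{dec:stab:lp}. The operator $\Dm_\gga$ acts on $l_\infty(\Z)$ as convolution by $\gga$ following downsampling, so its $l_\infty$-operator norm equals $\|\gga\|_1$; Theorem~\ref{thm:1-norm}, applicable since $0\le\nu\le k-2\le k-1$ and $k\ge 2$, then gives $\|\Dm_\gga\|_\infty=\|\gga\|_1\le C(k,\nu)$ with $C(k,\nu)$ as in \eqref{def:C_k_nu}. Substituting this bound into the first line of \eqref{dec:stab:lp} yields the first inequality of \eqref{dec:stab:linfty:spline} immediately, since the exponent $j$ there is positive. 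For the detail estimate I merge the two powers of $\|\Dm_\gga\|_\infty$ occurring in the second line of \eqref{dec:stab:lp} into the single factor $\|\Dm_\gga\|_\infty^{\,j-\ell}$; because $\ell\le j$ the exponent $j-\ell$ is nonnegative, so monotonicity permits replacing $\|\Dm_\gga\|_\infty$ by its upper bound $C(k,\nu)$, producing $C(k,\nu)^{j-\ell}=C(k,\nu)^{j}C(k,\nu)^{-\ell}$ and hence the second line of \eqref{dec:stab:linfty:spline}.

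For part~(2) I take $p=2$. The key point is to identify $\|\Dm_\gga\|_2$ with the Toeplitz norm $\|A_\gga\|_2$: writing $\Dm_\gga c=\gga*(c\downarrow2)$, the downsampling $c\mapsto c\downarrow2$ is a norm-one surjection of $l_2(\Z)$ onto itself, and any target $d\in l_2(\Z)$ is realized as $c\downarrow2$ by the even-supported $c$ with $c_{2k}=d_k$, $c_{2k+1}=0$, which satisfies $\|c\|_2=\|d\|_2$. Therefore the supremum defining $\|\Dm_\gga\|_2$ equals $\sup_{\|d\|_2=1}\|\gga*d\|_2=\|A_\gga\|_2$. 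Finally Theorem~\ref{thm:pseudo-spline:inverse}(3) with $n=2k$, using $\lfloor(2k-1)/2\rfloor=k-1$ and $n/2+\nu=k+\nu$, evaluates this to $\|A_\gga\|_2=2^{k+\nu-1}\big/\sum_{j=0}^\nu\binom{k+\nu}{j}$, which is the value asserted for $p=2$.

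I expect no serious obstacle, as the corollary is purely a matter of feeding the explicit norm bounds of the earlier theorems into \eqref{dec:stab:lp}. The only two points requiring care are those flagged above: verifying that substituting the upper bound $C(k,\nu)$ into the detail inequality remains valid, which rests on the net exponent $j-\ell$ being nonnegative; and the identification $\|\Dm_\gga\|_2=\|A_\gga\|_2$, which is not automatic for a composed downsampling-plus-convolution operator but follows from the surjectivity of $\downarrow2$ on $l_2(\Z)$ together with the norm-preserving choice of an even-supported preimage.
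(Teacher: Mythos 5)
Your proposal is correct and follows essentially the same route as the paper, which proves the corollary in one line as a direct consequence of Theorems~\ref{thm:dec:stab} and~\ref{thm:1-norm} (with Theorem~\ref{thm:pseudo-spline:inverse}(3) supplying the $l_2$ value). Your explicit verification that $\|\Dm_\gga\|_2=\|A_\gga\|_2$ via a norm-preserving even-supported preimage under $\downarrow 2$, and your monotonicity check for substituting $C(k,\nu)$ with net exponent $j-\ell\ge 0$, are exactly the details the paper leaves implicit.
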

\begin{proof}
This is a direct consequence of Theorems~\ref{thm:dec:stab} and~\ref{thm:1-norm}.
\end{proof}

\section{Final Remarks}
\label{sec:remarks}
In this section, we give some further remarks.
\begin{enumerate}[1)]

\item Most examples and results in this paper deal with finitely supported masks for the purpose of simplicity of presentation. We point out that the MTER in \eqref{def:NewSubd:forward2} applies for any mask $\ga$ provided $\ga_{ev}(z)\neq 0$ for all $z\in\T$.

\item Using the weighted version of Wiener's lemma \cite{Greochenig2009}, one can study  classes of masks of infinite
support such as masks with polynomial or sub-exponential decay, and masks corresponding to rational symbols.
The extension of the class of masks might lead to even-inverse operators of smaller norm, and by that to the improvement of the stability  of the decomposition and the decay rate of the details as given in Theorem \ref{thm:dec:stab} and Theorem~\ref{thm:decay:d}, respectively.

\item The existence of MTER relies on  Wiener's lemma. Since a high-dimensional version of Wiener's lemma holds, our MTER in \eqref{def:NewSubd:forward2}  can be  generalized to any dimension $d\in\N$. We shall report related results elsewhere.


\end{enumerate}



\appendix
\section*{Appendix}
\label{sec:proofs}
\section{Proof of \eqref{eq:cubic:inverse}}
\begin{proof}[Proof of \eqref{eq:cubic:inverse}]
Note that
\[
\gga(z) =\frac{8}{z^{-1}+6+z}
 = \frac43\left(\sum_{n=0}^\infty6^{-2n}(z^{-1}+z)^{2n}-\sum_{n=0}^\infty 6^{-2n-1}(z^{-1}+z)^{2n+1}\right).
\]
For $\sum_{n=0}^\infty(\frac16)^{2n}(z^{-1}+z)^{2n}$, we have
\[
\begin{aligned}
\sum_{n=0}^\infty6^{-2n}(z^{-1}+z)^{2n}
&=\sum_{n=0}^\infty6^{-2n}\sum_{k=0}^{2n}{2n \choose k}z^{2(n-k)}
\\&=\sum_{n=0}^\infty6^{-2n}\sum_{k=-n}^{n}{2n \choose n- k}z^{2k}
\\&=\sum_{n=0}^{\infty}{2n \choose n}6^{-2n}+\sum_{k=1}^\infty\left[\sum_{n=k}^{\infty}{2n \choose n-k}6^{-2n}\right](z^{2k}+z^{-2k})
\\&=\sum_{n=0}^{\infty}{2n \choose n}6^{-2n}+\sum_{k=1}^\infty\left[\sum_{n=0}^{\infty}{2(n+k) \choose n}6^{-2(n+k)}\right](z^{2k}+z^{-2k}) .
\end{aligned}
\]
Similarly, for $\sum_{n=0}^\infty (\frac16)^{2n+1}(z^{-1}+z)^{2n+1}$, we have
\[
\begin{aligned}
\sum_{n=0}^\infty 6^{-2n-1}(z^{-1}+z)^{2n+1}
&=\sum_{k=0}^\infty\left[\sum_{n=0}^{\infty}{2(n+k)+1 \choose n}6^{-2(n+k)-1}\right](z^{2k+1}+z^{-2k-1}) .
\end{aligned}
\]
Define for $k=0,1,2,\ldots$,
\[
a_k:=\sum_{n=0}^{\infty}{2(n+k) \choose n}6^{-2(n+k)}\quad\mbox{and}\quad
b_k:=\sum_{n=0}^{\infty}{2(n+k)+1 \choose n}6^{-2(n+k)-1}.
\]
Then, we have
\[
\begin{aligned}
\gga(z)& = \frac43\sum_{n=0}^\infty\left(\frac{-1}{6}\right)^n(z^{-1}+z)^n \\
&=\frac43\left[ a_0+\sum_{k=1}^\infty a_k(z^{2k}+z^{-2k})-\sum_{k=0}^\infty b_k(z^{2k+1}+z^{-2k-1})\right].
\end{aligned}
\]
Using the formula ${n+1\choose m} = {n \choose m}+{n \choose m-1}$, it is easy to check that
\[
a_k = \frac{1}{6}(b_{k-1}+b_k)\quad\mbox{and}\quad b_k =\frac16(a_k+a_{k+1}).
\]
Hence
\begin{equation}
\label{eq:Recursive_Cubic_Inverse}
a_{k+1}=6b_k-a_k
\quad\mbox{and}\quad
b_k =6a_k-b_{k-1}.
\end{equation}
Next,  we  proceed to prove the following identities using the recurrence relations \eqref{eq:Recursive_Cubic_Inverse}:
\begin{equation}\label{eq:formular:cubic-inverse}
a_k = \frac{3\sqrt{2}}{4}(3-2\sqrt{2})^{2k}\quad\mbox{and}\quad
b_k = \frac{3\sqrt{2}}{4}(3-2\sqrt{2})^{2k+1},\quad k=0,1,2,\ldots.
\end{equation}
First, we compute directly $a_0$ and $b_0$. Using the identities
\begin{equation}
\label{eq:series}
(1-x)^{-t} = \sum_{n=0}^\infty{t-1+n\choose n} x^n,
\end{equation}
and
\[
{2n\choose n}  = 4^n {n-1/2\choose n},
\]
we have
\[
a_0=\sum_{n=0}^{\infty}{2n \choose n}6^{-2n} = \sum_{n=0}^{\infty}{1/2-1+n\choose n}9^{-n}=(1-1/9)^{-1/2}= \frac{3\sqrt{2}}{4}.
\]
Similarly,
\[
b_0=\sum_{n=0}^{\infty}{2n+1 \choose n}6^{-2n-1} = 3\left[\sum_{n=0}^{\infty}{1/2-1+n\choose n}9^{-n}-1\right]= \frac{3\sqrt{2}}{4}(3-2\sqrt{2}).
\]
Now, recursively using the relation of $a_k$ and $b_k$ in \eqref{eq:Recursive_Cubic_Inverse}, we have
\[
\begin{aligned}
a_{k+1} &= 6b_k-a_k = \frac{3\sqrt{2}}{4} (6(3-2\sqrt{2})^{2k+1}-(3-2\sqrt{2})^{2k})
\\&=  \frac{3\sqrt{2}}{4} (3-2\sqrt{2})^{2k}(6(3-2\sqrt{2})-1)
\\&= \frac{3\sqrt{2}}{4} (3-2\sqrt{2})^{2k}(3-2\sqrt{2})^2
\\& =\frac{3\sqrt{2}}{4} (3-2\sqrt{2})^{2(k+1)},
\end{aligned}
\]
and
\[
\begin{aligned}
b_k&= 6a_k-b_{k-1} = \frac{3\sqrt{2}}{4} (6(3-2\sqrt{2})^{2k}-(3-2\sqrt{2})^{2k-1})
\\&=  \frac{3\sqrt{2}}{4} (3-2\sqrt{2})^{2k-1}(6(3-2\sqrt{2})-1)
\\&= \frac{3\sqrt{2}}{4} (3-2\sqrt{2})^{2k-1}(3-2\sqrt{2})^2
\\& =\frac{3\sqrt{2}}{4} (3-2\sqrt{2})^{2k+1}.
\end{aligned}
\]
Therefore,  \eqref{eq:formular:cubic-inverse} holds.
In summary, we conclude that
\[
\begin{aligned}
\gga(z)  & = \frac43\left[a_0+\sum_{k=1}^\infty a_k(z^{2k}+z^{-2k})-\sum_{k=0}^\infty b_k(z^{2k+1}+z^{-2k-1})\right]\\
 & = \sqrt{2}\left[1+\sum_{k=1}^\infty (3-2\sqrt{2})^{2k}(z^{2k}+z^{-2k})-\sum_{k=0}^\infty (3-2\sqrt{2})^{2k+1}(z^{2k+1}+z^{-2k-1})\right],
 \end{aligned}
\]
which proves \eqref{eq:cubic:inverse}.

\end{proof}


\section{Proof of Theorem~\ref{thm:pseudo-spline:inverse}}

\begin{proof}[Proof of Theorem~\ref{thm:pseudo-spline:inverse}]
Let $z=e^{-i\theta}$ and $x=\sin^2(\theta/2)$. One can show that
\[
\ga^{n,\nu}(e^{-i\theta}) = 2e^{i(\lfloor n/2\rfloor -n/2)\theta}\cos^n(\theta/2) Q_{n,\nu}(\sin^2(\theta/2)),
\]
with
\[
Q_{n,\nu}(x) := \sum_{j=0}^\nu{n/2-1+j \choose j} x^j =\frac{1}{(1-x)^{n/2}}+O(x^{\nu+1}),
\]
where in the last equality we use \eqref{eq:series}.
Note that $Q_{n,\nu}(x)\ge1$ for all $x\ge0$. By  that $\ga_{ev}^{n,\nu}(z^2) = \frac12(\ga^{n,\nu}(z)+\ga^{n,\nu}(-z))$, we have
\[
\ga_{ev}^{n,\nu}(e^{-2i\theta}) = e^{i(\lfloor n/2\rfloor -n/2)\theta}\Big[(1-x)^{n/2} Q_{n,\nu}(x)
+i^{2\lfloor n/2\rfloor - n}(-1)^n x^{n/2} Q_{n,\nu}(1-x)\Big].
\]
It is easy to see that
$\ga^{n,\nu}_{ev}(1)=1$.

For $n=2k$, we have
\begin{equation}
\label{eq:2k}
\ga_{ev}^{2k,\nu}(e^{-2i\theta}) = (1-x)^{k}Q_{2k,\nu}(x)+x^{k}Q_{2k,\nu}(1-x)=:R(x)+R(1-x),
\end{equation}
where
\[
R(x) := (1-x)^kQ_{2k,\nu}(x)=(1-x)^{k}\sum_{j=0}^\nu{k-1+j \choose j}x^j.
\]
 Define $g(x):=R(x)+R(1-x)$. By using $(j+1){k+j \choose j+1}-j{k-1+j \choose j}=k{k-1+j \choose j}$, one can show that
 \[
 R'(x)=-(k+\nu){k-1+\nu \choose \nu}(1-x)^{k-1}x^\nu.
\]
Thus,
\[
g'(x)=R'(x)-R'(1-x)=(k+\nu){k-1+\nu \choose \nu}x^\nu(1-x)^\nu\Big[x^{k-1-\nu}-(1-x)^{k-1-\nu}\Big].
\]
It is easily seen that $g'(x)\le 0$ for $x\in[0,1/2]$ and $g'(x)\ge0$ for $x\in[1/2,1]$.
Consequently,
\[
\begin{aligned}
\min_{z\in\T}|\ga_{ev}^{2k,\nu}(z)|
& = \min_{z\in\T}\ga_{ev}^{2k,\nu}(z)\\
& =\min_{x\in[0,1]}g(x) = g(1/2)=2R(1/2)\\
& =2^{1-k}\sum_{j=0}^\nu {k-1+j \choose j} 2^{-j}>0,
\end{aligned}
\]
where the last equation  can be shown to equivalent to
\begin{equation}
\label{eq:primalPositivity}
\min_{z\in\T}|\ga_{ev}^{2k,\nu}(z)|
=2^{1-k-\nu}\sum_{j=0}^\nu {k+\nu \choose j}>0,
\end{equation}
Moreover, by \eqref{eq:2-norm}, we have
 \[
 \|A_{\ga_{ev}^{2k,\nu}}\|_2=\max_{z\in\T}|\ga_{ev}^{2k,\nu}(z)| =\max_{x\in[0,1]} g(x)= g(0)=g(1)=1.
 \]

For $n=2k+1$, we have
\[
\ga_{ev}^{2k+1,\nu}(e^{-2i\theta})
=e^{-i\theta/2}\left[(1-x)^{k+1/2}Q_{2k+1,\nu}(x)+i \cdot x^{k+1/2}Q_{2k+1,\nu}(1-x)\right],
\]
from which, we have
\[
\begin{aligned}
|\ga_{ev}^{2k+1,\nu}(e^{-2i\theta})|^2
&=(1-x)^{2k+1}(Q_{2k+1,\nu}(x))^2
+x^{2k+1}(Q_{2k+1,\nu}(1-x))^2
\\&=:R(x)^2+R(1-x)^2,
\end{aligned}
\]
where
\[
R(x):=(1-x)^{k+1/2}Q_{2k+1,\nu}(x)= (1-x)^{k+1/2}\sum_{j=0}^\nu{k-1/2+j \choose j}x^j.
\]
 Define $g(x)=R(x)^2+R(1-x)^2$. Then by using $(j+1){k+1/2+j \choose j+1}-j{k-1/2+j \choose j}=(k+1/2){k-1/2+j \choose j}$, similarly, we can show that
 \[
 R'(x) = -(k+1/2+\nu){k-1/2+\nu \choose \nu}(1-x)^{k-1/2}x^\nu.
 \]
Consequently,
\[
\begin{aligned}
g'(x)&=2\big[R(x)R'(x)-R(1-x)R'(1-x)\big]
\\&=-2c x^\nu(1-x)^\nu \left[\sum_{j=0}^\nu {k-1/2+j\choose j}(1-x)^jx^j\Big[(1-x)^{2k-\nu-j}-x^{2k-\nu-j}\Big]\right],
\end{aligned}
\]
where $c=(k+1/2+\nu){k-1/2+\nu \choose \nu}$. Now it is easy to see that  each term
\[
t_j(x):={k-1/2+j\choose j}(1-x)^jx^j\big[(1-x)^{2k-\nu-j}-x^{2k-\nu-j}\big]
\]
in the above summation for $j=0,\ldots, \nu$,   satisfies
\[
t_j(x)\ge0 \mbox{ for } x\in[0,1/2]\mbox{ and  }t_j(x)\le0\mbox{ for }x\in[1/2,1].
\]
 Hence $g'(x)\le 0$ for $x\in[0,1/2]$ and $g'(x)\ge0$ for $x\in[1/2,1]$.
Consequently,
\[
\begin{aligned}
\min_{x\in[0,1]}g(x) & = g(1/2)=2R(1/2)^2
\\&=2\left(2^{-k-1/2}\sum_{j=0}^\nu {k-1/2+j \choose j} 2^{-j}\right)^2
\\&=2^{-2k-2\nu}\left(\sum_{j=0}^\nu {k+1/2+\nu \choose j}\right)^2.
\end{aligned}
\]
Therefore,
\[
\min_{z\in\T}|\ga_{ev}^{2k+1,\nu}(z)| = \min_{x\in[0,1]}\sqrt{g(x)} = 2^{-k-\nu}\sum_{j=0}^\nu {k+1/2+\nu \choose j}>0,
\]
and  by \eqref{eq:2-norm}, we have
\[
\|A_{\ga_{ev}^{2k+1,\nu}}\|_2=\max_{z\in\T}|\ga_{ev}^{2k+1,\nu}(z)| = g(0)=g(1)=1.
\]

Combining the above results for $n$ even and odd, we see that $|\ga_{ev}^{n,\nu}(z)|>0$ for all $z\in\T$ (In particlular, $\ga_{ev}^{2k,\nu}(z)>0$ for all $z\in\T$).  Hence by Wiener's lemma, its inverse $\gga = (\ga_{ev}^{n,\nu})^{-1}$ exists and  $\gga(1) = \frac{1}{\ga_{ev}^{n,\nu}(1)} = 1$. Moreover, by \eqref{eq:2-norm}, we have
\[
\|A_{\ga_{ev}^{n,\nu}}\|_2 = \max_{z\in\T}|\ga_{ev}^{n,\nu}(z)|=1
\] and
\[
\|A_\gga\|_2 =\max_{z\in\T}|\gga(z)|= \frac{1}{\min_{z\in\T}|\ga_{ev}^{n,\nu}(z)|}=\frac{2^{\lfloor \frac{n-1}{2}\rfloor+\nu}}{\sum_{j=0}^\nu{n/2+\nu\choose j}}.
\]
The exponential decay of the elements of $\gga$ in the case that $n=2k$, follows directly from Corollary~\ref{cor:1-norm-by-2-norm} since $\ga_{ev}^{2k,\nu}(z)>0$ for all $z\in\T$. In case $n=2k+1$, the exponential decay of the elements of $\gga$ follows from the weighted version of Wiener's lemma \cite{Greochenig2009}.
%
\end{proof}

\section{Proof of Theorem~\ref{thm:1-norm}}

\begin{proof}[Proof of Theorem~\ref{thm:1-norm}]
By \eqref{eq:2k} and  \eqref{eq:primalPositivity}, we have $\ga_{ev}^{2k,{\nu}}(z)>0$ for all $z\in\T$. Applying Corollary~\ref{cor:1-norm-by-2-norm} and Theorem~\ref{thm:pseudo-spline:inverse}, we have
\begin{equation}
\label{eq:gammaL}
|\gga_n|\le K \lambda^{|n|},\quad n\in\Z,
\end{equation}
where $K=\kappa\cdot\max\left\{1,\frac{(1+\sqrt{\kappa})^2}{2\kappa}\right\}$ with
\[
\kappa =\frac{\sup_{z\in\T}|\ga_{ev}^{2k,\nu}(z)|}{\inf_{z\in\T}|\ga_{ev}^{2k,\nu}(z)|}=\|A_\gga\|_2 = \frac{2^{k+{\nu}-1}}{\sum_{j=0}^{\nu} {k+{\nu} \choose j}} ,
\]
and $\lambda = q^{1/s}$ with
 \[
 q=(\sqrt{\kappa}-1)/(\sqrt{\kappa}+1), \quad s= \lfloor (k+{\nu})/2\rfloor.
 \]
 Thus by \eqref{eq:gammaL} we have
\[
\begin{aligned}
\|\gga\|_1
&= \sum_{n\in\Z} |\gga_n|
= |\gga_0|+2\sum_{n=1}^\infty |\gga_n|
\le K\Big(1+2\sum_{n=1}^\infty \lambda^n\Big)
\\&=K\Big(-1+2\sum_{n=0}^\infty \lambda^n\Big)
=K\frac{1+\lambda}{1-\lambda} = C(k,\nu).
\end{aligned}
\]

\end{proof}

\section*{Acknowledgments}
The authors thank Felipe Cucker for his support which initiated this paper.



\end{document}